\newtheorem{theorem}{Theorem}
\newtheorem{remark}{Remark}
\newtheorem*{future work}{Future work}
\newcommand{\beq}{\begin{equation}}
\newcommand{\eeq}{\end{equation}}
\newtheorem{lemma}{Lemma}
\newtheorem{property}{Property}
\newcommand{\RNum}[1]{\uppercase\expandafter{\romannumeral #1\relax}}
\newtheorem{corollary}{Corollary}
\begin{document}
\begin{center}
{\Large \textbf{Generalised Barred Preferential Arrangements}}
\end{center}

\noindent\begin{minipage}[t]{7cm}\fontsize{13}{1}
Sithembele Nkonkobe,  Venkat Murali,
\fontsize{8}{1}
{\it Department of Mathematics (Pure \&Applied)\\ Rhodes
University, Grahamstown, 6140,\\ South Africa,\\ snkonkobe@yahoo.com, v.murali@ru.ac.za}
\end{minipage}
\hspace{5mm}
\begin{minipage}[t]{7cm}
Be\'{a}ta B\'{e}nyi\\\fontsize{8}{1}
{\it  Faculty of Water Sciences\\ National University of Public Service\\ Hungary,\\ beata.benyi@gmail.com}
\end{minipage}

\section*{Abstract\label{section:1}}
 \noindent A barred preferential arrangement is a preferential arrangement onto which a number of identical bars are inserted in between the blocks  of the preferential arrangement. In this study we examine combinatorial properties of barred preferential arrangements whose elements are colored with a number of available colors. 
 
  \noindent\fontsize{10}{1}\selectfont     
Mathematics Subject Classifications: 05A18, 05A19, 05A16 
\vspace{0.1mm}
\\\textbf{Keyword(s)}: Preferential arrangements, barred preferential arrangements, generating functions of Nelsen-Schmidt type. 
\normalsize
 
\begin{section}{Introduction}

\noindent\textbf{Barred preferential arrangements.}

\noindent A \textit{preferential arrangement} of a set $X_n=\{1,2,3,\ldots,n\}$ is an ordered partition of the set $X_n$.
Introducing $\xi$ bars (where $\xi\in\mathbb{N}_0=\{0,1,2,3,\ldots,\}$) in between blocks of a preferential arrangement results in a \textit{barred preferential arrangement}. The $\xi$ bars induce $\xi+1$ sections in which the elements of $X_n$ are preferentially arranged (see~\cite{barred:2013,Nkonkobe & Murali Nelesn-Schmidt generating function}). The following are some examples of barred preferential arrangements of the set $X_6$ having two bars and three bars respectively.

$a)$ $35\quad 2|\quad|1\quad4\quad6$

$b)$ $|5\quad136\quad4\quad2|\quad |$

The two bars in $a)$ give rise to three sections; namely, the first section (from left to right) has two blocks, the second section is empty i.e the section between the two bars, and the third section has three blocks. Similarly, the barred preferential arrangement in $b)$ has four sections of which three are empty. Barred preferential arrangements seem to first appear in \cite{barred:2013}.
\begin{remark} In this study barred preferential arrangements are viewed as being formed by first placing bars and then preferentially arranging elements on the sections formed. The authors in \cite{Nkonkobe & Murali Nelesn-Schmidt generating function} have used the same technique in their arguments.
\end{remark}

  


\noindent In \cite{Nelsensconj} Nelsen conjectured that;  \begin{equation}\label{equation:1}\sum\limits_{k=0}^{n}\sum\limits_{s=0}^{k}\binom{k}{s}(-1)^{k-s}(\gamma+s)^n=\frac{1}{2}\sum\limits_{s=0}^{\infty}\frac{(\gamma+s)^n}{2^{s}},\end{equation} for $\gamma\in\mathbb{R}$ (the set of real numbers) and non-negative integer $n$.
In \cite{SoluNelsensconj} three alternative proofs of the conjecture were given by Donald Knuth et al. Here we propose and prove an alternative identity to the ones given by  Donald Knuth et al in \cite{SoluNelsensconj} in  generalising Nelsen's conjecture and show how one side our generalised identity can be interpreted combinatorially in terms of barred preferential arrangements. From now on we will refer to \eqref{equation:1} as Nelsen's Theorem. 

The authors in \cite{Higher order geometric polynomials} proposed the following generating function as a way of generalising geometric polynomials, $\frac{(1+\alpha x)^{\gamma/\alpha}}{(1-y((1+\alpha x)^{\beta/\alpha}-1))^{\lambda}}$. The  authors using a non combinatorial argument recognised that the generating function offers a form of generalised barred preferential arrangements. In this study we examine combinatorial properties of  integer sequences arising from the generating function when $\alpha=0$ and $y=1$.
 \normalsize 
\end{section}

   \begin{section}{Nelsen-Schmidt Question}

    Nelsen and Schmidt in \cite{Chains in power set Nelsen 91} proposed the family of generating functions,\begin{equation}\label{equation:8} \frac{e^{\gamma x}}{2-e^x}.\end{equation}  In the manuscript for $\gamma=2$ the authors interpreted the generating function as being that of the number of chains in the power set of $X_n$. The generating function for $\gamma=0$ is known to be that of number of preferential arrangements/number of outcomes in races with ties (see~\cite{gross:1962,mendelson:1982}). In the manuscript Nelsen and Schmidt then asked, ``could there be combinatorial structures associated with either $X_n$ or the power set of $X_n$ whose integer sequences are generated by  members of the family in \eqref{equation:8} for other values of $\gamma$?" We will now refer to this question as the Nelsen-Schmidt question, and to the generating function in \eqref{equation:8} as the Nelsen-Schmidt generating function. 
   In answering the question of Nelsen and Schmidt the authors in \cite{Nkonkobe & Murali Nelesn-Schmidt generating function} have proposed that the generating function $\frac{e^{\gamma x}}{2-e^{x}}$ is that of number of restricted barred preferential arrangements for all values of $\gamma$ in non-negative integers. Furthermore, the authors in \cite{Nkonkobe & Murali Nelesn-Schmidt generating function} interpreted the following more general generating function in terms of restricted barred preferential arrangements, 
   \begin{equation}\label{equation:3}\frac{e^{\gamma x}}{(2-e^x)^\lambda},\end{equation}
   
   where $\gamma$ and $\lambda$ are non-negative integers not simultaneously equalling to 0. 
   
  In the following we examine combinatorial properties of a further generalisation of the Nelsen-Schmidt generating function, 
   
   \begin{equation}\label{equation:4}
   \frac{e^{\gamma x}}{2-e^{\beta x}},
   \end{equation}
   where $\beta,\gamma$ are non-negative integers not simultaneously equalling to 0.

     \noindent In the following the numbers $S(n,i,\alpha,\beta,\gamma)$ are generalised stirling numbers. The numbers seem to first appear in \cite{A unified approach to generalized Stirling numbers}.
      \begin{lemma}\cite{Corcino Rorberto 2001}\label{lemma:3} For $\alpha,\beta,\gamma\in\mathbb{N}_0$, where $(\alpha,\beta,\gamma)\not=(0,0,0)$, \fontsize{10}{1}
      
      $S(n,i,\alpha,\beta,\gamma)=\frac{1}{\beta^ii!}\Delta^i(\beta i+\gamma|\alpha)_n\big|_{s=0}=\frac{1}{\beta^ii!}\sum\limits_{s}^{}(-1)^{i-s}\binom{i}{s}(\beta s+\gamma|\alpha)_n$.
      \end{lemma}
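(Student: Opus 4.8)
The plan is to proceed from the defining relation for the generalised Stirling numbers $S(n,i,\alpha,\beta,\gamma)$ and to invert it by means of the finite difference operator. Recall that, in the sense of \cite{A unified approach to generalized Stirling numbers}, these numbers are introduced as the connection coefficients expressing one generalised factorial basis in terms of another, namely
\begin{equation*}
(t\,|\,\alpha)_n=\sum_{i=0}^{n}S(n,i,\alpha,\beta,\gamma)\,(t-\gamma\,|\,\beta)_i,
\end{equation*}
where $(x\,|\,\theta)_m=\prod_{k=0}^{m-1}(x-k\theta)$ denotes the generalised falling factorial (so that $(x\,|\,0)_m=x^m$ and $(x\,|\,1)_m$ is the ordinary falling factorial). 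Throughout I take $\beta\neq 0$, which is the case in which the stated formula is meaningful, since the division by $\beta^i$ forces this restriction.

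First I would make the substitution $t=\beta s+\gamma$, so that $t-\gamma=\beta s$ and the right-hand factorial collapses:
\begin{equation*}
(\beta s\,|\,\beta)_i=\prod_{k=0}^{i-1}(\beta s-k\beta)=\beta^i\prod_{k=0}^{i-1}(s-k)=\beta^i (s)_i,
\end{equation*}
where $(s)_i=s(s-1)\cdots(s-i+1)$ is the ordinary falling factorial. The defining relation then becomes
\begin{equation*}
(\beta s+\gamma\,|\,\alpha)_n=\sum_{i=0}^{n}S(n,i,\alpha,\beta,\gamma)\,\beta^i\,(s)_i,
\end{equation*}
which now reads as an expansion of the polynomial $s\mapsto(\beta s+\gamma\,|\,\alpha)_n$ in the falling-factorial basis $\{(s)_i\}$.

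The key step is then to isolate a single coefficient by applying the forward difference operator $\Delta$ (acting in the variable $s$, with $\Delta f(s)=f(s+1)-f(s)$) and evaluating at $s=0$. I would use the elementary facts that $\Delta(s)_j=j\,(s)_{j-1}$, hence $\Delta^i(s)_j=\frac{j!}{(j-i)!}(s)_{j-i}$ for $i\le j$, together with $(s)_{j-i}\big|_{s=0}=\delta_{ij}$; these combine into the orthogonality relation $\Delta^i(s)_j\big|_{s=0}=i!\,\delta_{ij}$. Applying $\frac{1}{\beta^i i!}\Delta^i(\cdot)\big|_{s=0}$ to both sides of the expansion and invoking linearity annihilates every term except the $i$-th, giving
\begin{equation*}
\frac{1}{\beta^i i!}\,\Delta^i(\beta s+\gamma\,|\,\alpha)_n\big|_{s=0}=S(n,i,\alpha,\beta,\gamma),
\end{equation*}
which is precisely the first equality of the lemma.

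Finally, the second equality is merely Newton's forward-difference expansion $\Delta^i f\big|_{s=0}=\sum_{s=0}^{i}(-1)^{i-s}\binom{i}{s}f(s)$ applied to $f(s)=(\beta s+\gamma\,|\,\alpha)_n$, so no further work is needed there. I expect the only genuine obstacle to be careful bookkeeping around the substitution and the orthogonality relation: one must read $(x\,|\,\theta)_m$ with the correct increment so that $(\beta s\,|\,\beta)_i$ factors as $\beta^i(s)_i$, and one should note separately that the formula presupposes $\beta\neq 0$, the degenerate case $\beta=0$ (permitted by the hypothesis $(\alpha,\beta,\gamma)\neq(0,0,0)$) requiring its own limiting interpretation.
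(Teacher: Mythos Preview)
Your argument is correct and is in fact the standard derivation of this explicit formula from the Hsu--Shiue connection-coefficient definition. Note, however, that the paper does not supply its own proof of this lemma: it is stated with a citation to Corcino (2001) and used as a black box, so there is no in-paper argument to compare against. Your proof is exactly what one would expect to find in that reference.

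Two minor remarks. First, you rightly flag that the formula requires $\beta\neq 0$; the paper's hypothesis $(\alpha,\beta,\gamma)\neq(0,0,0)$ is looser than what the displayed identity actually needs, and in the body of the paper the lemma is only ever invoked with $\beta\in\mathbb{N}$, so this is not an issue in practice. Second, the paper's statement contains what appears to be a typographical slip: it writes $\Delta^i(\beta i+\gamma\,|\,\alpha)_n\big|_{s=0}$, whereas the variable of differencing should be $s$, i.e.\ $\Delta^i(\beta s+\gamma\,|\,\alpha)_n\big|_{s=0}$, exactly as you have it. Your version is the intended one, as the subsequent explicit sum over $s$ confirms.
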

      \begin{theorem} \label{Theorem:15}
       \label{theorem:10}\cite{Paper with interpretation of G-stirling numbers}
     Suppose $\alpha,\beta,\gamma$ are non-negative integers such that $\alpha$ divides both $\beta$, and $\gamma$. Given $i+1$ distinct cells such that the first $i$ cells each contains $\beta$ labelled compartments, and the $(i+1){th}$ cell contains $\gamma$ labelled compartments. The compartments are given cyclic ordered numbering. The compartments are limited to one ball. The number $\beta^i i!S(n,i,\alpha,\beta,\gamma)$, is the number of ways of distributing $n$ distinct elements into the $i+1$ cells such that the first $i$ cells are non-empty.
      \end{theorem}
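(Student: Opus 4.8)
The plan is to give a direct combinatorial argument: first interpret the generalised falling factorial $(\beta s+\gamma|\alpha)_n$ as an \emph{unrestricted} placement count, then recover the non-emptiness condition on the first $i$ cells through inclusion--exclusion, and finally invoke Lemma~\ref{lemma:3}. I would begin by making the role of the cyclic numbering precise as an $\alpha$-spacing deletion rule for placing the $n$ distinct elements one at a time in order of their labels. To place an element, one selects any compartment that is still available, deposits the element there, and then deletes that compartment together with the next $\alpha-1$ available compartments in the cyclic order of the cell it belongs to, the remaining compartments of that cell re-forming a cycle. The central observation, and the step I expect to be the main obstacle, is that this placement is \emph{history-independent}: because $\alpha\mid\beta$ and $\alpha\mid\gamma$, every cell always holds a number of available compartments that is a multiple of $\alpha$, so a non-empty cell holds at least $\alpha$ of them and can always absorb the deletion. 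Consequently every currently available compartment is a legal choice, and after $k$ placements exactly $\beta s+\gamma-k\alpha$ compartments remain available, independent of the previous choices. I would prove this invariant by induction on $k$, taking care that ``the next $\alpha-1$ compartments'' is well defined after each contraction and that distinct placement sequences yield distinct distributions (reading off, for each element, the original compartment it occupies, with the deleted-but-unoccupied compartments simply left empty).

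Granting the invariant, the number of unrestricted distributions of the $n$ elements into a configuration of $s$ cells with $\beta$ compartments together with the $(i+1)$th cell of $\gamma$ compartments is the product $\prod_{k=0}^{n-1}(\beta s+\gamma-k\alpha)=(\beta s+\gamma|\alpha)_n$; when $n\alpha>\beta s+\gamma$ a factor hits $0$ before turning negative, matching the fact that no placement exists. Next I would impose the non-emptiness requirement by inclusion--exclusion over the set $T\subseteq\{1,\dots,i\}$ of first-cells that are forced to be empty. Forbidding the cells of $T$ leaves a configuration with $i-|T|$ cells of $\beta$ compartments plus the $\gamma$-cell, whose unrestricted count is $(\beta(i-|T|)+\gamma|\alpha)_n$ and depends only on $|T|$; hence the number of distributions in which all of the first $i$ cells are non-empty equals $\sum_{t=0}^{i}(-1)^{t}\binom{i}{t}(\beta(i-t)+\gamma|\alpha)_n$.

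Finally I would reindex by $s=i-t$ to rewrite this as $\sum_{s=0}^{i}(-1)^{i-s}\binom{i}{s}(\beta s+\gamma|\alpha)_n$, which is exactly $\beta^i i!\,S(n,i,\alpha,\beta,\gamma)$ by Lemma~\ref{lemma:3}, completing the argument. The genuinely delicate point throughout is the history-independence of the $\alpha$-spacing rule, since without the divisibility hypotheses a cell could be left with fewer than $\alpha$ compartments and a naive deletion would over- or under-count; everything else (the inclusion--exclusion and the appeal to Lemma~\ref{lemma:3}) is bookkeeping.
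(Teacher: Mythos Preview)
The paper does not give its own proof of this statement; it is quoted verbatim from the cited source (Corcino, Hsu and Tan) and used only as input for Corollary~\ref{lemma:6}. There is therefore nothing in the paper to compare your argument against.

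That said, your argument is correct and is the standard one. The $\alpha$-spacing deletion rule you describe is exactly the intended meaning of the phrase ``cyclic ordered numbering'' together with the divisibility hypotheses $\alpha\mid\beta$ and $\alpha\mid\gamma$: placing a ball consumes $\alpha$ consecutive compartments in the cyclic order of its cell, and the divisibility guarantees that a cell with any available compartment has at least $\alpha$ of them, so the deletion never overruns. Your history-independence invariant then gives $\prod_{k=0}^{n-1}(\beta s+\gamma-k\alpha)=(\beta s+\gamma\mid\alpha)_n$ for the unrestricted count, the observation that this product vanishes (rather than going negative) when $n\alpha>\beta s+\gamma$ is exactly the divisibility at work, and the inclusion--exclusion over which of the first $i$ cells are forced empty followed by the reindexing $s=i-t$ and the appeal to Lemma~\ref{lemma:3} is routine. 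The only point worth tightening in a final write-up is the parenthetical about ``distinct placement sequences yield distinct distributions'': since the elements are placed in the fixed order $1,2,\ldots,n$, a placement sequence \emph{is} the function recording which original compartment each element occupies, so injectivity is immediate and no separate argument is needed.
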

      
     Using Lemma~\ref{lemma:3} we obtain the following corollary of Theorem~\ref{theorem:10}.
      
         \begin{corollary}\label{lemma:6} Given $X_n$. Partitioning $X_n$ into $i+1$ distinct blocks, such that the first $i$ blocks have $\beta$ labelled compartments, and the $(i+1)^{th}$ section has $\gamma$ labelled compartments. The number 
              $i!\beta^iS(n,i,0,\beta,\gamma)$ is the number of all possible partitions of $X_n$ into the $i+1$ blocks such that only the $(i+1){th}$ block may be empty.  
               \end{corollary}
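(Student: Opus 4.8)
The plan is to combine the closed-form expression of Lemma~\ref{lemma:3} at $\alpha=0$ with a direct distribution argument in the spirit of Theorem~\ref{theorem:10}. First I would specialise Lemma~\ref{lemma:3} to $\alpha=0$. Since the generalised falling factorial degenerates to $(x|0)_n=x^n$, the lemma yields
\begin{equation}
i!\beta^i S(n,i,0,\beta,\gamma)=\sum_{s=0}^{i}(-1)^{i-s}\binom{i}{s}(\beta s+\gamma)^n,
\end{equation}
so it suffices to show that the right-hand side counts the configurations described in the statement.

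Next I would set up the combinatorial model. A configuration is an assignment sending each element of $X_n$ to a (cell, compartment) pair, where the first $i$ cells each carry $\beta$ labelled compartments and the $(i+1)$th cell carries $\gamma$ labelled compartments, subject only to the requirement that the first $i$ cells be non-empty. The essential point to record is that the passage $\alpha\to 0$ changes the meaning of a compartment: the factor $(x|\alpha)_n$ appearing in Theorem~\ref{theorem:10} counts cyclically ordered placements with at most one ball per compartment, whereas $(x|0)_n=x^n$ counts arbitrary functions into $x$ labelled boxes. Hence at $\alpha=0$ the compartments impose no occupancy restriction, and a configuration is exactly an ordered partition of $X_n$ into the $i+1$ blocks together with a colouring of each element by one of the $\beta$ (resp.\ $\gamma$) compartment labels of its block --- which is precisely the object named in the corollary.

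Finally I would count these configurations by inclusion--exclusion on the non-emptiness constraint. Dropping the constraint but forbidding a prescribed set $T\subseteq\{1,\dots,i\}$ of the first cells leaves $\beta(i-|T|)+\gamma$ available compartments, hence $(\beta(i-|T|)+\gamma)^n$ admissible assignments; summing $(-1)^{|T|}$ over all such $T$ and reindexing by $s=i-|T|$ reproduces $\sum_{s}(-1)^{i-s}\binom{i}{s}(\beta s+\gamma)^n$, matching the closed form above. The one delicate point, and the step I expect to be the main obstacle, is that Theorem~\ref{theorem:10} cannot be quoted verbatim here, because its hypothesis that $\alpha$ divides both $\beta$ and $\gamma$ fails for $\alpha=0$; so the real work lies in justifying the reinterpretation of the compartments as unrestricted labelled boxes and re-running the counting argument directly, rather than appealing to the theorem as a black box.
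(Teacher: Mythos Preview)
Your argument is correct and follows exactly the route the paper intends: the corollary is presented there simply as the $\alpha=0$ specialisation of Theorem~\ref{theorem:10} via the closed form of Lemma~\ref{lemma:3}, with no further details given. Your observation that the divisibility hypothesis of Theorem~\ref{theorem:10} technically fails at $\alpha=0$, and your consequent decision to re-run the inclusion--exclusion directly rather than quote the theorem as a black box, actually makes your write-up more rigorous than the paper's one-line deduction.
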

      
      The numbers  $B_n(\alpha,\beta,\gamma)$ are defined in the following ways. We will refer to these numbers later.
      \begin{lemma}\cite{On generalised Bell polynomials}For $\alpha,\beta,\gamma\in\mathbb{N}_0$, where  where $(\alpha,\beta,\gamma)\not=(0,0,0)$,\label{lemma:5}
      $$B_n(\alpha,\beta,\gamma)=\sum\limits_{i}^{}i!\beta^iS(n,i,\alpha,\beta,\gamma).$$
      \end{lemma}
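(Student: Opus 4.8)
The plan is to pass to exponential generating functions and collapse the identity into the summation of a geometric series. I would take as the starting point the generating-function definition of the generalised Bell polynomials,
\[
\sum_{n\ge 0}B_n(\alpha,\beta,\gamma)\frac{t^n}{n!}=\frac{(1+\alpha t)^{\gamma/\alpha}}{2-(1+\alpha t)^{\beta/\alpha}},
\]
which is the natural $\alpha$-deformation of the Nelsen--Schmidt generating function \eqref{equation:4}; indeed, in the limit $\alpha\to 0$ one has $(1+\alpha t)^{1/\alpha}\to e^{t}$ and the right-hand side becomes $\frac{e^{\gamma t}}{2-e^{\beta t}}$. The task is then reduced to showing that $\sum_i i!\beta^{i}S(n,i,\alpha,\beta,\gamma)$ has this same generating function.

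First I would determine the vertical generating function of the generalised Stirling numbers. The crucial input is the exponential generating function of the generalised falling factorial, $\sum_{n\ge 0}(x|\alpha)_n\,\frac{t^n}{n!}=(1+\alpha t)^{x/\alpha}$, which follows by rescaling the ordinary binomial series. Substituting $x=\beta s+\gamma$ into the closed form of Lemma~\ref{lemma:3}, interchanging the sums over $n$ and $s$, and multiplying through by $\beta^{i}i!$ to clear the prefactor, the common factor $(1+\alpha t)^{\gamma/\alpha}$ pulls out and, by the binomial theorem, the remaining sum $\sum_s(-1)^{i-s}\binom{i}{s}(1+\alpha t)^{\beta s/\alpha}$ collapses to $\big((1+\alpha t)^{\beta/\alpha}-1\big)^{i}$. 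This gives
\[
\sum_{n\ge 0}\beta^{i}\,i!\,S(n,i,\alpha,\beta,\gamma)\frac{t^n}{n!}=(1+\alpha t)^{\gamma/\alpha}\big((1+\alpha t)^{\beta/\alpha}-1\big)^{i}.
\]

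The last step is to sum this over $i\ge 0$. Writing $u=(1+\alpha t)^{\beta/\alpha}-1$, the sum becomes $(1+\alpha t)^{\gamma/\alpha}\sum_{i\ge 0}u^{i}=(1+\alpha t)^{\gamma/\alpha}(1-u)^{-1}$, and since $1-u=2-(1+\alpha t)^{\beta/\alpha}$ this is precisely the defining generating function of $B_n(\alpha,\beta,\gamma)$. Comparing coefficients of $t^n/n!$ then yields the stated identity.

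I expect the only real obstacle to be the formal justification of the two interchanges of summation and of the geometric expansion. Both are clean once one works in the ring of formal power series in $t$: the series $u$ has zero constant term, so $(1-u)^{-1}$ is a well-defined formal power series and each coefficient of $t^n$ collects only finitely many terms, with $S(n,i,\alpha,\beta,\gamma)=0$ whenever $i>n$. The degenerate case $\alpha=0$ needs only the convention $(1+\alpha t)^{1/\alpha}=e^{t}$, after which the argument goes through unchanged.
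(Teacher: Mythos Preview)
Your argument is correct: starting from the generating-function characterisation in \eqref{equation:21}, you compute the vertical EGF of $\beta^i i!\,S(n,i,\alpha,\beta,\gamma)$ via Lemma~\ref{lemma:3} and the binomial theorem, then sum the resulting geometric series in $u=(1+\alpha t)^{\beta/\alpha}-1$ to recover the defining generating function of $B_n(\alpha,\beta,\gamma)$. The formal-power-series justification you give (that $u$ has zero constant term, so only finitely many $i$ contribute to each coefficient) is exactly what is needed.

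As for comparison: the paper does not prove this lemma at all. It is quoted from \cite{On generalised Bell polynomials}, and the surrounding text (``The numbers $B_n(\alpha,\beta,\gamma)$ are defined in the following ways'') presents Lemma~\ref{lemma:5} and the generating function \eqref{equation:21} jointly as the definition of $B_n(\alpha,\beta,\gamma)$, both imported from the same source. So there is no ``paper's own proof'' to compare against; what you have supplied is a self-contained verification that the two cited descriptions of $B_n(\alpha,\beta,\gamma)$ are consistent, which the paper simply takes for granted.
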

     
      \begin{lemma}\cite{On generalised Bell polynomials}For real/complex $\alpha,\beta,\gamma$ such that $(\alpha,\beta,\gamma)\not=(0,0,0)$,
             \begin{equation}\label{equation:21}
              \sum\limits_{n=0}^{\infty}B_n(\alpha,\beta,\gamma)\frac{x^n}{n!}=\frac{(1+\alpha t)^{\gamma/\alpha}}{2-(1+\alpha t)^{\beta/\gamma}}.
             \end{equation}
                                  
             \end{lemma}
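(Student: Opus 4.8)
The plan is to compute the exponential generating function directly from the definition of $B_n(\alpha,\beta,\gamma)$ in Lemma~\ref{lemma:5}, feeding in the explicit formula for the generalised Stirling numbers from Lemma~\ref{lemma:3}, and then to recognise the resulting sum as a geometric series. The single external ingredient I would invoke is the exponential generating function of the generalised factorial,
\[
\sum_{n=0}^{\infty}(z\,|\,\alpha)_n\frac{t^n}{n!}=(1+\alpha t)^{z/\alpha},
\]
which follows from the binomial theorem after writing $(z\,|\,\alpha)_n=\alpha^n\binom{z/\alpha}{n}n!$.

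First I would substitute Lemma~\ref{lemma:3} into Lemma~\ref{lemma:5} to eliminate the Stirling numbers, obtaining
\[
B_n(\alpha,\beta,\gamma)=\sum_{i}\sum_{s}(-1)^{i-s}\binom{i}{s}(\beta s+\gamma\,|\,\alpha)_n,
\]
since the prefactor $i!\beta^i$ cancels exactly against the $\frac{1}{\beta^i i!}$ in the Stirling formula. Multiplying by $t^n/n!$, summing over $n$, and interchanging the order of summation, the innermost sum over $n$ is precisely the factorial generating function above evaluated at $z=\beta s+\gamma$, giving $(1+\alpha t)^{(\beta s+\gamma)/\alpha}=(1+\alpha t)^{\gamma/\alpha}\big[(1+\alpha t)^{\beta/\alpha}\big]^{s}$.

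Next I would set $u=(1+\alpha t)^{\beta/\alpha}$ and pull the $t$-independent factor $(1+\alpha t)^{\gamma/\alpha}$ out front, reducing the double sum over $i$ and $s$ to
\[
(1+\alpha t)^{\gamma/\alpha}\sum_{i}\sum_{s}(-1)^{i-s}\binom{i}{s}u^{s}
=(1+\alpha t)^{\gamma/\alpha}\sum_{i}(u-1)^{i},
\]
where the inner sum collapses by the binomial theorem. Summing the geometric series $\sum_{i}(u-1)^i=\tfrac{1}{2-u}$ then yields $\dfrac{(1+\alpha t)^{\gamma/\alpha}}{2-(1+\alpha t)^{\beta/\alpha}}$, as required (so that the exponent $\beta/\gamma$ printed in~\eqref{equation:21} should in fact read $\beta/\alpha$).

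The step demanding the most care is the legitimacy of the two formal manipulations, namely interchanging $\sum_n$ with $\sum_i\sum_s$ and summing the geometric series $\sum_i(u-1)^i$. The cleanest way to justify both at once is to work in the ring of formal power series in $t$: since $u-1=(1+\alpha t)^{\beta/\alpha}-1=\beta t+O(t^2)$ has zero constant term, the power $(u-1)^i$ contributes only to coefficients of $t^n$ with $n\ge i$, so for each fixed $n$ only finitely many values of $i$ are relevant, and both the rearrangement and the geometric sum are well defined coefficientwise. Analytically the same identity holds for $t$ small enough that $\lvert(1+\alpha t)^{\beta/\alpha}-1\rvert<1$.
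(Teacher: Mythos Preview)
The paper does not supply its own proof of this lemma; it is simply quoted from Corcino--Corcino \cite{On generalised Bell polynomials}. Your derivation is correct and is essentially the standard one: plug Lemma~\ref{lemma:3} into Lemma~\ref{lemma:5}, use the generalised binomial EGF $\sum_n (z|\alpha)_n t^n/n!=(1+\alpha t)^{z/\alpha}$, collapse the inner binomial sum to $(u-1)^i$, and sum the resulting geometric series. Your remark on formal-power-series legitimacy (that $(u-1)^i$ has valuation $\ge i$ in $t$) is exactly the right justification.

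Two small notes. First, you correctly spotted that the printed exponent $\beta/\gamma$ in~\eqref{equation:21} is a typo for $\beta/\alpha$; the paper itself silently uses the corrected form elsewhere (e.g.\ in the discussion surrounding the generating function $\frac{(1+\alpha x)^{\gamma/\alpha}}{(1-y((1+\alpha x)^{\beta/\alpha}-1))^{\lambda}}$). Second, the statement as printed also has a mismatched dummy variable ($x$ on the left, $t$ on the right), which you implicitly fixed by working with $t$ throughout. Finally, be aware that Lemma~\ref{lemma:3} as stated in this paper is only asserted for $\alpha,\beta,\gamma\in\mathbb{N}_0$, whereas the present lemma claims real/complex parameters; strictly speaking you are using the explicit formula in Lemma~\ref{lemma:3} as the \emph{definition} of $S(n,i,\alpha,\beta,\gamma)$ for general parameters, which is indeed how the source \cite{A unified approach to generalized Stirling numbers,On generalised Bell polynomials} proceeds.
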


         
         
         
          \begin{property}\label{property:1}
         
         By Lemma~\ref{lemma:3} we have,
         \begin{equation}\label{equation:11}
               S(n,i,0,\beta,0)=\frac{1}{\beta^ii!}\sum\limits_{s}^{}(-1)^{i-s}\binom{i}{s}(\beta s)^n.\end{equation} 
        Hence, for fixed $i,\beta$ in non-negative integers, the number $\beta^ii!S(n,i,0,\beta,0)$ is the number of ways of partitioning an $n$-element set into $i$ blocks where each of the $i$ blocks has $\beta$ labelled compartments such that none of the blocks is empty.

         \noindent $A$: $X_n$ set is partitioned into $i$ ordered blocks. 
          
          \noindent$B$: Elements on each block are distributed into $\beta$ labelled\\\indent\indent compartments.

        \noindent The question is; how many partitions of $X_n$ are possible satisfying $A$\\\indent and $B$.
        
       \noindent The above question can be rephrased in the following way. 
        
       \noindent In how many ways can we distribute $n$ distinct elements into $i$ labelled cells where each of the cells has $\beta$ compartments, such that none of the $i$ cells is empty, where $0\leq i\leq n$. 
         By \eqref{equation:11} and Lemma~\ref{lemma:5} the number we are looking for is   
          $B_n(0,\beta,0)$.  
         \end{property}

    \begin{property}\label{property:2}
$X_n$ is distributed into $\gamma$ labelled compartments.   
    \end{property}

    \noindent Denote $\begin{bmatrix}
       \frac{x^n}{n!}
       \end{bmatrix}\frac{e^{\gamma x}}{2-e^{\beta x}}$ by $H_{n}(\beta,\gamma)$.

         \begin{remark}
         Throughout the remainder of this paper in forming barred preferential arrangements(BPAs) where applicable, the section with property~\ref{property:2} will be the first section from the left(sometimes this section will be referred to as the special section) the remaining sections will all have property~\ref{property:1} unless stated otherwise.
         \end{remark} 
         
         In the following theorem we answer the Nelsen-Schmidt question asked in \eqref{equation:8} in a generalised form.
   \begin{theorem}\label{theorem:7}
   The generating function $\frac{e^{\gamma x}}{2-e^{\beta x}}$ for $\beta,\gamma$ in non-negative integers (where $(\beta,\gamma)\not=(0,0))$, is that of the number of barred preferential arrangements with one bar, such that one section has property~\ref{property:1} and the other section has property~\ref{property:2}.
   \end{theorem}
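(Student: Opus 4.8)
The plan is to prove the statement entirely at the level of exponential generating functions, using the labelled-product principle together with the standard sequence construction for preferential arrangements. The key observation is that a single bar splits the underlying set $X_n$ into two ordered, distinguishable sections: a left (special) section carrying Property~\ref{property:2} and a right section carrying Property~\ref{property:1}. First I would record that assembling such an arrangement on $X_n$ is equivalent to choosing which elements fall to the left of the bar and which fall to the right, and then independently imposing the appropriate structure on each part. Since the two sections occupy fixed, distinguishable positions relative to the bar, no symmetry must be quotiented out, and the exponential generating function of the whole is simply the product of the exponential generating functions of the two section types.

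Next I would compute the generating function of the special section. Distributing a $k$-element set into $\gamma$ labelled compartments amounts to assigning each element one of $\gamma$ choices independently, giving $\gamma^k$ arrangements; hence by Property~\ref{property:2} its exponential generating function is $\sum_{k\ge 0}\gamma^k\frac{x^k}{k!}=e^{\gamma x}$.

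I would then treat the Property~\ref{property:1} section. A single non-empty block whose $m$ elements are distributed into $\beta$ labelled compartments is counted by $\beta^m$ for $m\ge 1$, so a single block has exponential generating function $e^{\beta x}-1$. A Property~\ref{property:1} section is an ordered sequence of such blocks; applying the sequence construction and summing the resulting geometric series gives $\sum_{i\ge 0}(e^{\beta x}-1)^i=\frac{1}{2-e^{\beta x}}$. This is consistent with Lemma~\ref{lemma:5} and equation~\eqref{equation:21} specialised to $\alpha=\gamma=0$, which identify $\frac{1}{2-e^{\beta x}}$ as the exponential generating function of $B_n(0,\beta,0)$, the total count over all numbers of blocks $i$.

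Finally I would multiply the two factors: by the product rule the exponential generating function counting one-bar barred preferential arrangements with the prescribed section types is $e^{\gamma x}\cdot\frac{1}{2-e^{\beta x}}=\frac{e^{\gamma x}}{2-e^{\beta x}}$, which is by definition $\sum_{n\ge 0}H_n(\beta,\gamma)\frac{x^n}{n!}$. The main obstacle, and the step deserving the most care, is the justification of the product decomposition itself: I must argue that the bar genuinely yields a labelled product of two independent structures, so that the binomial convolution $\sum_k\binom{n}{k}\gamma^k\,B_{n-k}(0,\beta,0)$ is \emph{exactly} the number of admissible arrangements on $X_n$, and that the sequence step for Property~\ref{property:1} is legitimate as a formal power-series identity (which it is, since $e^{\beta x}-1$ has zero constant term). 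Once these two points are secured, the remainder reduces to the routine generating-function computations above.
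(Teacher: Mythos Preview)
Your proposal is correct and follows essentially the same route as the paper: both factor the generating function as $e^{\gamma x}\cdot\frac{1}{2-e^{\beta x}}$, identify the first factor with Property~\ref{property:2} and the second with Property~\ref{property:1} (i.e.\ with $B_n(0,\beta,0)$), and read off the binomial convolution $H_n(\beta,\gamma)=\sum_r\binom{n}{r}B_r(0,\beta,0)\gamma^{n-r}$. The only difference is expository: you spell out the sequence construction $\sum_{i\ge 0}(e^{\beta x}-1)^i=\frac{1}{2-e^{\beta x}}$ explicitly, whereas the paper simply invokes the already-established interpretation of $B_n(0,\beta,0)$.
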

   \begin{proof}
     By \eqref{equation:4} we have,
         \begin{equation}\label{equation:9}
            H_n(\beta,\gamma)=\sum\limits_{r}\binom{n}{r}B_{r}(0,\beta,0)\gamma^{n-r}.
            \end{equation}
            
        Hence, $H_n(\beta,\gamma)$ is the number of barred preferential arrangements of an $n$-element set having one bar such that one fixed section has property~\ref{property:1} above and the other section has property~\ref{property:2}. 
        \end{proof}
        \begin{remark}
        The generating function $\frac{e^{\gamma x}}{2-e^{\beta x}}$ can be derived from the generating function in \eqref{equation:21}.
        \end{remark}

   \begin{theorem}\label{theorem:3}$\beta\geq0$ and $n,\gamma\geq1$,
   $$H_n(\beta,\gamma)=\gamma^n+\sum\limits_{i=0}^{n-1}\binom{n}{i}H_i(\beta,\gamma)\beta^{n-i}.$$
   \end{theorem}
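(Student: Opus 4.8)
The plan is to read the recurrence as a coefficient identity coming directly from the functional equation that defines $F(x):=\sum_{n\ge 0}H_n(\beta,\gamma)\frac{x^n}{n!}=\frac{e^{\gamma x}}{2-e^{\beta x}}$, exactly in the spirit of the proof of Theorem~\ref{theorem:7}. First I would clear the denominator to obtain the functional equation $(2-e^{\beta x})F(x)=e^{\gamma x}$ in the ring of formal power series (legitimate since $2-e^{\beta x}$ has constant term $1$ and is therefore invertible), and rearrange it into the form
\[
2F(x)=e^{\gamma x}+e^{\beta x}F(x).
\]

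Next I would extract $[x^n/n!]$ from both sides. The left-hand side yields $2H_n(\beta,\gamma)$ and the summand $e^{\gamma x}$ yields $\gamma^n$. For the product $e^{\beta x}F(x)$ I would apply the binomial (Cauchy) convolution for exponential generating functions together with $[x^m/m!]\,e^{\beta x}=\beta^m$, giving $[x^n/n!]\,e^{\beta x}F(x)=\sum_{i=0}^{n}\binom{n}{i}\beta^{n-i}H_i(\beta,\gamma)$. Hence
\[
2H_n(\beta,\gamma)=\gamma^n+\sum_{i=0}^{n}\binom{n}{i}\beta^{n-i}H_i(\beta,\gamma).
\]

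The one genuinely load-bearing manipulation is isolating the $i=n$ summand: since $\binom{n}{n}\beta^{0}H_n(\beta,\gamma)=H_n(\beta,\gamma)$, splitting this term off and transposing it cancels one copy of $H_n(\beta,\gamma)$ against the factor $2$, leaving precisely
\[
H_n(\beta,\gamma)=\gamma^n+\sum_{i=0}^{n-1}\binom{n}{i}\beta^{n-i}H_i(\beta,\gamma),
\]
which is the claim. This is where I expect the only real friction, and it is purely bookkeeping: one must be sure the top term of the convolution carries coefficient $\beta^0=1$ so that the cancellation with the $2$ is exact. The hypothesis $n\ge 1$ is what forces each factor $\beta^{n-i}$ in the remaining sum to vanish in the degenerate case $\beta=0$ (so that no spurious $0^0$ appears), whereas $\gamma\ge 1$ is needed only for the combinatorial reading below and not for the algebraic identity.

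Finally, as a consistency check I would verify the recurrence combinatorially against the interpretation of Theorem~\ref{theorem:7}. The term $\gamma^n$ counts those arrangements whose property~\ref{property:1} section is empty, so that all $n$ elements lie in the property~\ref{property:2} section and each receives one of $\gamma$ compartments. For every other arrangement the property~\ref{property:1} section is nonempty, and peeling off its last block --- a nonempty set of $n-i$ elements distributed into $\beta$ compartments in $\beta^{n-i}$ ways, chosen among the $n$ elements in $\binom{n}{i}$ ways --- leaves a valid arrangement of the remaining $i$ elements counted by $H_i(\beta,\gamma)$. Summing over $i$ from $0$ to $n-1$ reproduces the stated sum, so this bijection furnishes an independent route to the identity should a purely combinatorial proof be preferred.
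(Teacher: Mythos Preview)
Your proof is correct. Your primary argument is algebraic: you clear the denominator in $F(x)=e^{\gamma x}/(2-e^{\beta x})$, extract the $n$th coefficient of $2F(x)=e^{\gamma x}+e^{\beta x}F(x)$ by Cauchy convolution, and split off the $i=n$ term to cancel the factor~$2$. The paper, by contrast, argues entirely combinatorially --- and in fact its proof is essentially what you give in your final paragraph as a ``consistency check'': split on whether the property~\ref{property:1} section is empty (the $\gamma^n$ term), and if not, strip off one extremal block of that section (the paper removes the block adjacent to the bar rather than the last one, but either choice yields the same bijection up to a harmless relabelling). So your write-up subsumes the paper's argument while adding a generating-function route that works formally for any real $\beta,\gamma$; the combinatorial version, on the other hand, makes the recurrence's structure visible and is the style the paper uses throughout for the surrounding theorems.
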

   \begin{proof}
   Let $\mathcal{H}_n(\beta, \gamma)$ denote the set of barred preferential arrangements on $n$ elements with one bar (so with two sections), such that the elements of the left hand side are labelled further with a number between $\{1,\ldots, \gamma\}$, while the elements right to the bar with a number from the set $\{1,\ldots, \beta\}$. Clearly, $|\mathcal{H}_n(\beta, \gamma)|=H_n(\beta, \gamma)$. We obtain an element of the set $h\in\mathcal{H}_n(\beta, \gamma)$ the following way: if there is no element on the right hand side of the bar, then we need only to assign to each element of $[n]$ a number from $[\gamma]$, which gives $\gamma^n$ possibilities. If there is at least one element to the right of the bar, then first let us construct the block right next to the bar in this section from $(n-i)$ elements in $\binom{n}{n-i}\beta^{n-i}$ ways. The remainder $i$ elements form an element $h_i$ of  $\mathcal{H}_i(\beta, \gamma)$. Since $n-i\not=0$, we obtain the number of all elements in $\mathcal{H}_n(\beta, \gamma)$ by summing up over $i$, where $i$ runs from $0$ to  $n-1$. 
   \end{proof}
  
   \begin{theorem}\label{theorem:4}For $n,\beta,\gamma\geq0,$  where $(\beta,\gamma)\not=(0,0),$
   
   $$H_{n+1}(\beta, \gamma)=\gamma H_n(\beta,\gamma)+\beta\sum\limits_{i}^{}\binom{n}{i}H_i(\beta,\gamma)H_{n-i}(\beta,\beta).$$
   \end{theorem}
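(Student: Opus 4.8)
The plan is to give two independent arguments and keep whichever reads more cleanly; I expect the generating-function route to be the safe one to verify the identity and the bijective route to be the one that fits the style of Theorem~\ref{theorem:7} and Theorem~\ref{theorem:3}. For the analytic route I would start from the generating function in \eqref{equation:4}, write $F(x)=\frac{e^{\gamma x}}{2-e^{\beta x}}$, and differentiate once. By the product/quotient rule,
$$F'(x)=\gamma\,\frac{e^{\gamma x}}{2-e^{\beta x}}+\beta\,\frac{e^{\gamma x}}{2-e^{\beta x}}\cdot\frac{e^{\beta x}}{2-e^{\beta x}}.$$
The first summand is $\gamma F(x)$, and the second is $\beta$ times the product of the exponential generating function of $H_n(\beta,\gamma)$ (namely $F$ itself) and that of $H_n(\beta,\beta)$ (namely $\frac{e^{\beta x}}{2-e^{\beta x}}$). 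Extracting $[x^n/n!]$ from both sides, using $[x^n/n!]F'(x)=H_{n+1}(\beta,\gamma)$ together with the fact that the coefficient of a product of two exponential generating functions is a binomial convolution, yields the claimed recurrence at once. The only thing to check is the algebra of the derivative and the identification of the second factor as the $H(\beta,\beta)$ series, which is routine.

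For the bijective route I would work with the set $\mathcal{H}_{n+1}(\beta,\gamma)$ of barred preferential arrangements on $[n+1]$ with one bar, the left (special) section carrying Property~\ref{property:2} with $\gamma$ compartments and the right section carrying Property~\ref{property:1} with $\beta$ compartments, so that $|\mathcal{H}_{n+1}(\beta,\gamma)|=H_{n+1}(\beta,\gamma)$. I would then split according to the location of the element $n+1$. If $n+1$ lies in the special section, it occupies one of the $\gamma$ compartments and the remaining $n$ elements form an arbitrary member of $\mathcal{H}_n(\beta,\gamma)$; this case contributes $\gamma H_n(\beta,\gamma)$. If instead $n+1$ lies in the right section, let $i$ be the number of the other $n$ elements lying strictly to the left of the block containing $n+1$, that is, in the special section together with all blocks preceding that block; choose them in $\binom{n}{i}$ ways. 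Those $i$ elements, with their section structure, form a member of $\mathcal{H}_i(\beta,\gamma)$.

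The heart of the argument is then to identify the remaining piece — the block containing $n+1$ together with all blocks to its right, on $n-i+1$ elements — with $\beta\,H_{n-i}(\beta,\beta)$. Here $n+1$ picks one of $\beta$ compartments (the factor $\beta$); the other occupants of its block, each already carrying a compartment label from $[\beta]$, play exactly the role of a special section of type $\gamma=\beta$; and the blocks strictly to the right play the role of the Property~\ref{property:1} section with $\beta$ compartments. Summing over $i$ from $0$ to $n$ gives $\beta\sum_i\binom{n}{i}H_i(\beta,\gamma)H_{n-i}(\beta,\beta)$, and adding the first case completes the count. I expect the main obstacle to be precisely this last identification: one must verify that stripping $n+1$ turns its block into a (possibly empty) special section of type $(\beta,\beta)$, and that the degenerate cases — $n+1$ alone in its block, or no blocks to its right, corresponding to $H_0(\beta,\beta)=1$ — match the empty sections correctly, so that the decomposition is a genuine bijection with nothing over- or under-counted.
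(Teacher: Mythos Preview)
Your bijective route is essentially the paper's own argument: split $\mathcal{H}_{n+1}(\beta,\gamma)$ by the location of $n+1$, with the $\gamma H_n(\beta,\gamma)$ term coming from the special section and, when $n+1$ lies in a block $B^*$ of the right section, cut just before $B^*$ to get an $\mathcal{H}_i(\beta,\gamma)$ piece and an $\mathcal{H}_{n-i}(\beta,\beta)$ piece (with $B^*$ playing the role of the special block), times $\beta$ for the compartment of $n+1$. Your additional generating-function verification via $F'(x)=\gamma F(x)+\beta F(x)\cdot\frac{e^{\beta x}}{2-e^{\beta x}}$ is correct and not in the paper, but it is a routine check rather than a genuinely different idea.
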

   \begin{proof}
   The recursion is based on the process of the inserting the $(n+1)$th element into a barred preferential arrangement on $n$ elements. First, we can insert the $(n+1)$th element into the block of the left section. Then, we just need to choose a label from $[\gamma]$ for this new element. Otherwise, let $B^*$ denote the block into which we add $(n+1)$. Cut the barred preferential arrangement before $B^*$ and let $i$ denote the number of elements in the part before $B^*$. The first part is then a barred preferential arrangement from $\mathcal{H}_i(\beta, \gamma)$, while the second part can be seen also as a barred preferential arrangement of the rest of the elements  with $B^*$ as the special block next to left of the bar, i.e.,  from $\mathcal{H}_{n-i}(\beta, \beta)$.  We choose in $\binom{n}{i}$ ways the elements for the first part, and choose the label of $(n+1)$ in $\beta$ ways. Multiplying these together and summing up by letting the index $i$ to run, we obtain the theorem. 
   \end{proof}
   \begin{theorem}\label{theorem:5} For $n,\beta,\gamma\geq0$, $(\beta,\gamma)\not=(0,0),$
   $$B_n(0,\beta,0)=\sum_{i}^{}\binom{n}{i}H_i(\beta,\gamma)(-1)^{n-i}\gamma^{n-i}.$$
   
   \end{theorem}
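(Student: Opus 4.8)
The plan is to recognise Theorem~\ref{theorem:5} as the inversion of relation~\eqref{equation:9} already established in Theorem~\ref{theorem:7}, and to prove it by a short exponential generating function (EGF) computation. First I would record the two EGFs I need. By~\eqref{equation:4} and the definition of $H_n(\beta,\gamma)$,
$$\sum_{n\geq 0} H_n(\beta,\gamma)\frac{x^n}{n!}=\frac{e^{\gamma x}}{2-e^{\beta x}}.$$
Next, setting $\alpha=0$ and $\gamma=0$ in~\eqref{equation:21} (taking the limit $\alpha\to 0$, under which $(1+\alpha x)^{\gamma/\alpha}\to e^{\gamma x}$ and $(1+\alpha x)^{\beta/\alpha}\to e^{\beta x}$, with the numerator collapsing to $1$ when $\gamma=0$) gives
$$\sum_{n\geq 0} B_n(0,\beta,0)\frac{x^n}{n!}=\frac{1}{2-e^{\beta x}}.$$

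The crucial observation is then the factorisation
$$\frac{1}{2-e^{\beta x}}=e^{-\gamma x}\cdot\frac{e^{\gamma x}}{2-e^{\beta x}},$$
which holds for every $\gamma$. I would expand $e^{-\gamma x}=\sum_{m\geq 0}(-1)^m\gamma^m\frac{x^m}{m!}$ and apply the Cauchy product rule for EGFs: the coefficient of $\frac{x^n}{n!}$ in a product $\big(\sum a_m \frac{x^m}{m!}\big)\big(\sum b_k \frac{x^k}{k!}\big)$ is $\sum_i \binom{n}{i} a_{n-i} b_i$. Taking $b_i=H_i(\beta,\gamma)$ and $a_{n-i}=(-1)^{n-i}\gamma^{n-i}$, the coefficient of $\frac{x^n}{n!}$ on the left is $B_n(0,\beta,0)$ and on the right is $\sum_i \binom{n}{i}H_i(\beta,\gamma)(-1)^{n-i}\gamma^{n-i}$, which is exactly the claimed identity.

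Conceptually this is just binomial inversion: \eqref{equation:9} states that $(H_n)$ is the $\gamma$-weighted binomial transform of $(B_n)$, i.e. $H_n=\sum_r\binom{n}{r}B_r\gamma^{n-r}$, and Theorem~\ref{theorem:5} is its inverse transform, obtained at the level of EGFs by multiplying through by $e^{-\gamma x}$. One could alternatively argue purely combinatorially by a sign-reversing involution (or inclusion--exclusion) that strips the property~\ref{property:2} section: pair each arrangement counted by $H_i$ with a signed assignment of colours from $\{1,\ldots,\gamma\}$ to the remaining $n-i$ elements, and cancel all configurations in which any such element is coloured, leaving only the $i=n$ uncoloured survivors, which form a pure property~\ref{property:1} arrangement counted by $B_n(0,\beta,0)$. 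I expect the only real care needed is the bookkeeping at the degenerate value $\gamma=0$ (where $0^{n-i}$ must be read with the convention $0^0=1$, so the sum collapses to $H_n(\beta,0)=B_n(0,\beta,0)$) together with the clean justification that the numerator of~\eqref{equation:21} tends to $1$ as $\alpha\to 0$ when $\gamma=0$; the EGF argument itself is otherwise routine.
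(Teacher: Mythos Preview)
Your argument is correct. The primary route you take is algebraic: you invert relation~\eqref{equation:9} at the level of exponential generating functions by multiplying $\dfrac{e^{\gamma x}}{2-e^{\beta x}}$ through by $e^{-\gamma x}$ and reading off the coefficient of $x^n/n!$ via the Cauchy product, which is precisely the weighted binomial inversion of~\eqref{equation:9}. The paper instead argues combinatorially by inclusion--exclusion: it interprets $\binom{n}{n-i}\gamma^{n-i}H_i(\beta,\gamma)$ as the number of arrangements in $\mathcal{H}_n(\beta,\gamma)$ in which a prescribed $(n-i)$-subset is forced into the special $\gamma$-compartment section, and then sieves down to those with that section empty, which are exactly the objects counted by $B_n(0,\beta,0)$. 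The alternative you sketch at the end (a sign-reversing involution stripping the $\gamma$-coloured elements) is essentially the paper's argument. Your EGF derivation is shorter and entirely mechanical once~\eqref{equation:9} is available; the paper's phrasing keeps to the bijective spirit of the surrounding section and makes the role of the special section visible without invoking generating functions.
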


   \begin{proof}
   Let $\mathcal{B}_i$ be the number of barred preferential arrangement of the set $\mathcal{H}_n(\beta, \gamma)$ with at least $(n-i)$ elements in the first, special block with $\gamma$ compartments. $|\mathcal{B}_i|=\binom{n}{n-i}H_i(\beta,\gamma)\gamma^{n-i}$. The application of the inclusion- exclusion principle completes the proof.
   \end{proof}
  
\end{section}
   \begin{section}{generalised barred preferential arrangements.}

The generating function for the number of barred preferential arrangements is (see\cite{barred:2013});  \begin{equation}\label{equation:13}\frac{1}{(2-e^{x})^{\lambda+1}},\end{equation} where $\lambda$ is a non-negative integer. The work in \cite{Nkonkobe & Murali Nelesn-Schmidt generating function} is one generalisation of these barred preferential arrangements where the authors studied the generating function
  
  \begin{equation}\label{equation:5}
  \frac{e^{\gamma x }}{(2-e^{x})^{\lambda}},
  \end{equation}
  
  where $\lambda,\gamma\in\mathbb{N}_0$, where $\mathbb{N}_0$ is the set of non-negative integers.
   
   Another generalisation of barred preferential arrangements has been done in \cite{Higher order geometric polynomials}.

     In this section we propose a further generalisation of barred preferential arrangements to the generalisation we have in the previous section by interpreting the following generating function in terms of barred preferential arrangements,
   \begin{equation}\label{equation:2}\frac{e^{\gamma x}}{(2-e^{\beta x})^{\lambda}},\end{equation}  where $\gamma\in\mathbb{N}_0$, and $\lambda,\beta\in\mathbb{N}$  (positive integers). 
   

  \begin{lemma}\label{lemma:2}\cite{On generalised Bell polynomials}For real/complex $\alpha,\beta,\gamma$ such that $(\alpha,\beta,\gamma)\not=(0,0,0)$,
        $$B_n(\alpha,\beta,\gamma)=\frac{1}{2}\sum\limits_{k=0}^{\infty}\frac{(\beta k+\gamma|\alpha)_n}{2^k}.$$
        \end{lemma}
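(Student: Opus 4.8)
The plan is to collapse the two earlier descriptions of $B_n(\alpha,\beta,\gamma)$ into a single summation identity, and then to prove that identity by a finite-difference (Euler-transform) argument, exactly the mechanism underlying the Knuth et al.\ proof of Nelsen's Theorem in \eqref{equation:1}. First I would substitute Lemma~\ref{lemma:3} into Lemma~\ref{lemma:5}. Since $i!\beta^iS(n,i,\alpha,\beta,\gamma)=\sum_s(-1)^{i-s}\binom{i}{s}(\beta s+\gamma|\alpha)_n$, summing over $i$ yields
$$B_n(\alpha,\beta,\gamma)=\sum_i\sum_s(-1)^{i-s}\binom{i}{s}(\beta s+\gamma|\alpha)_n.$$
Writing $a_k:=(\beta k+\gamma|\alpha)_n$, the inner sum is precisely the $i$-th forward difference $\Delta^i a_0$, so $B_n(\alpha,\beta,\gamma)=\sum_{i\ge 0}\Delta^i a_0$. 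The crucial observation is that $a_k=\prod_{t=0}^{n-1}(\beta k+\gamma-t\alpha)$ is a polynomial in $k$ of degree $n$; hence $\Delta^i a_0=0$ for $i>n$ and this sum is genuinely finite.

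Next I would evaluate the right-hand side $\tfrac12\sum_{k\ge 0}a_k/2^k$ and show it produces the same quantity. The tool is Newton's forward-difference expansion $a_k=\sum_j\binom{k}{j}\Delta^j a_0$, legitimate because $a_k$ is polynomial. Substituting and interchanging the (finite in $j$, absolutely convergent in $k$) summations gives
$$\frac12\sum_{k\ge 0}\frac{a_k}{2^k}=\frac12\sum_j\Delta^j a_0\sum_{k\ge 0}\binom{k}{j}2^{-k}.$$
Here I would invoke the elementary identity $\sum_{k\ge 0}\binom{k}{j}x^k=x^j/(1-x)^{j+1}$, which at $x=\tfrac12$ evaluates to the constant $2$ independently of $j$. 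Thus every inner sum contributes the same factor $2$, the prefactor $\tfrac12$ cancels it, and what remains is $\sum_j\Delta^j a_0$, matching the left-hand side computed above. Specialising $a_k$ from $(\beta k+\gamma|\alpha)_n$ to $(\gamma+k)^n$ recovers \eqref{equation:1}, so this may be read as the promised generalisation of Nelsen's Theorem.

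The main obstacle is one of rigour rather than of ideas: justifying the rearrangement and the Newton expansion for the stated real or complex parameters. I would dispatch this by noting that for fixed $n$ the summand $a_k2^{-k}$ is polynomially bounded against geometric decay, so $\sum_k a_k2^{-k}$ converges absolutely and the interchange with the finite $j$-sum is unproblematic. I would also record explicitly that $(\beta k+\gamma|\alpha)_n$ is a degree-$n$ polynomial in $k$ for all $\alpha,\beta,\gamma$, since this single fact is what simultaneously guarantees the vanishing of high-order differences and the validity of the Newton expansion, and therefore carries the whole argument.
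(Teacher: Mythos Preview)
The paper does not give its own proof of this lemma; it is quoted verbatim from Corcino and Corcino \cite{On generalised Bell polynomials} and used as a black box. So there is nothing in the paper to compare your argument against directly.

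That said, your proof is correct. Writing $a_k=(\beta k+\gamma\mid\alpha)_n$, a degree-$n$ polynomial in $k$, the combination of Lemma~\ref{lemma:3} and Lemma~\ref{lemma:5} gives $B_n(\alpha,\beta,\gamma)=\sum_{i\ge 0}\Delta^i a_0$, and the Euler-transform computation $\tfrac12\sum_{k\ge 0}a_k2^{-k}=\tfrac12\sum_j\Delta^j a_0\sum_{k\ge 0}\binom{k}{j}2^{-k}=\sum_j\Delta^j a_0$ finishes it. The convergence and interchange are exactly as you say, governed by polynomial-times-geometric decay.

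Your route differs from the spirit of the paper (and almost certainly from the cited source). The paper's own generalisation of Nelsen's Theorem, Theorem~\ref{theorem:30}, is obtained not by finite differences but by expanding the generating function: one writes $\frac{1}{2-u}=\frac12\sum_{k\ge 0}(u/2)^k$ with $u=(1+\alpha t)^{\beta/\alpha}$ inside \eqref{equation:21} and extracts coefficients using $[\,t^n/n!\,](1+\alpha t)^{(\beta k+\gamma)/\alpha}=(\beta k+\gamma\mid\alpha)_n$. That argument is a one-liner once the generating function is in hand; yours is more elementary in that it avoids analytic machinery and lives entirely in the algebra of finite differences, and it makes the link to \eqref{equation:1} transparent.

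One small point worth tightening: Lemmas~\ref{lemma:3} and~\ref{lemma:5}, which you invoke at the outset, are stated in the paper only for $\alpha,\beta,\gamma\in\mathbb{N}_0$, whereas the present lemma is asserted for real or complex parameters. Your core identity $\sum_i\Delta^i a_0=\tfrac12\sum_k a_k2^{-k}$ holds for any polynomial sequence $a_k$, so the only gap is in taking $B_n(\alpha,\beta,\gamma)=\sum_i i!\beta^i S(n,i,\alpha,\beta,\gamma)$ as the definition for general parameters; you should say so explicitly rather than leaning on the $\mathbb{N}_0$-stated lemmas.
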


    Denote $\begin{bmatrix}
   \frac{x^n}{n!}
   \end{bmatrix}\frac{e^{\gamma x}}{(2-e^{\beta x})^{\lambda}}$ by $H_{n}(\lambda,\beta,\gamma)$.

   \begin{theorem}\label{theorem:6}
   Given $\lambda,\gamma\in\mathbb{N}_0$ such that $(\lambda,\gamma)\not=(0,0)$, and $\beta\in\mathbb{N}$,
   
   
   $H_n(\lambda,\beta,\gamma)$ is the number of barred preferential arrangements of $X_n$ such that $\lambda$ of the sections have property~\ref{property:1} and one section has property~\ref{property:2}.
   \end{theorem}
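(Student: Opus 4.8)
The plan is to recognise the generating function $\frac{e^{\gamma x}}{(2-e^{\beta x})^{\lambda}}$ as a product of exponential generating functions, one factor for each of the $\lambda+1$ sections, and then to invoke the product rule for exponential generating functions of labelled structures. First I would record the two building blocks. By Property~\ref{property:1}, together with the specialisation of \eqref{equation:21} at $\alpha\to 0$, $\gamma=0$, the series $\frac{1}{2-e^{\beta x}}=\sum_n B_n(0,\beta,0)\frac{x^n}{n!}$ enumerates a single section enjoying Property~\ref{property:1}: that is, $B_n(0,\beta,0)$ counts the distributions of the elements placed in that section into ordered blocks each carrying $\beta$ labelled compartments. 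By the definition of Property~\ref{property:2}, the $n$ elements of a section can be distributed into $\gamma$ labelled compartments in $\gamma^n$ ways, so $e^{\gamma x}=\sum_n\gamma^n\frac{x^n}{n!}$ is the exponential generating function of a single section enjoying Property~\ref{property:2}.

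Next I would build a barred preferential arrangement of the prescribed shape as a labelled product. Following the convention fixed in the preceding Remark, the leftmost (special) section carries Property~\ref{property:2} while the remaining $\lambda$ sections each carry Property~\ref{property:1}; the $\lambda$ bars separating these $\lambda+1$ ordered sections are identical, so such an arrangement is determined by the ordered distribution of $X_n$ among the sections together with the chosen structure on each section. Constructing one therefore amounts to partitioning $X_n$ into an ordered $(\lambda+1)$-tuple of (possibly empty) blocks, placing the Property~\ref{property:2} structure on the first block and a Property~\ref{property:1} structure on each of the other $\lambda$ blocks. The product rule for exponential generating functions then gives
$$
\sum_n H_n(\lambda,\beta,\gamma)\frac{x^n}{n!}
= e^{\gamma x}\left(\frac{1}{2-e^{\beta x}}\right)^{\lambda}
= \frac{e^{\gamma x}}{(2-e^{\beta x})^{\lambda}},
$$
which is precisely the defining generating function of $H_n(\lambda,\beta,\gamma)$. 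Extracting the coefficient of $\frac{x^n}{n!}$ yields the explicit convolution
$$
H_n(\lambda,\beta,\gamma)=\sum_{r_0+r_1+\cdots+r_{\lambda}=n}\binom{n}{r_0,r_1,\ldots,r_{\lambda}}\gamma^{r_0}\prod_{j=1}^{\lambda}B_{r_j}(0,\beta,0),
$$
generalising \eqref{equation:9}; each summand counts the arrangements in which the special section receives $r_0$ elements and the $j$-th ordinary section receives $r_j$ elements, and summing over all compositions recovers every admissible arrangement exactly once. The degenerate boundary cases $\lambda=0$ (forcing $\gamma\neq 0$) and $\gamma=0$ are absorbed uniformly, since $e^{0\cdot x}=1$ corresponds to an empty special section, and the case $\lambda=1$ reduces to Theorem~\ref{theorem:7}.

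The main obstacle I anticipate is justifying that the product of exponential generating functions, whose factors commute as power series, faithfully models the ordered, bar-separated sections of a barred preferential arrangement. Concretely, one must verify that fixing the special section in the leftmost position while treating the $\lambda$ ordinary sections as occupying distinguishable ordered slots is compatible with the symmetry of the factor $\left(\frac{1}{2-e^{\beta x}}\right)^{\lambda}$: because the bars are identical and each section is assigned its own block of elements under the ordered partition, no over- or under-counting occurs, but this bookkeeping is the step deserving care. The remaining work — the multinomial coefficient extraction and the identification of each convolution term with a family of arrangements — is routine.
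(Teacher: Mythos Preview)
Your proposal is correct and follows essentially the same route as the paper: both arguments identify $\frac{1}{2-e^{\beta x}}$ as the exponential generating function of $B_n(0,\beta,0)$ (the paper via Lemma~\ref{lemma:2}, you via the limit of \eqref{equation:21}), recognise $e^{\gamma x}$ as that of $\gamma^n$, and then apply the product rule to obtain the multinomial convolution~\eqref{equation:12}, which is exactly your displayed formula with the indices shifted. Your discussion of the bookkeeping for ordered sections is more explicit than the paper's, but the underlying proof is the same.
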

   \begin{proof}

      $\frac{1}{2-e^{\beta x}}=\frac{1}{2}\sum\limits_{k=0}^{\infty}\frac{e^{(\beta k)x}}{2^k}\implies\begin{bmatrix}
      \frac{x^n}{n!}
      \end{bmatrix}\frac{1}{2-e^{\beta x}}=\frac{1}{2}\sum\limits_{k=0}^{\infty}\frac{(\beta k)^n}{2^k}$. 
    
     By $H_{n}(\lambda,\beta,\gamma)=\begin{bmatrix}\frac{x^n}{n!}
        \end{bmatrix}\frac{e^{\gamma x}}{(2-e^{\beta x})^{\lambda}}$ and Lemma~\ref{lemma:2} we have, 
       \begin{equation}\label{equation:12}H_n(\lambda,\beta,\gamma)=\sum\limits_{r_1+\cdots+r_{\lambda+1} = n}\binom{n}{r_1,r_2,\ldots,r_{\lambda+1}}\:\gamma^{r_1}\prod \limits_{i=2}^{\lambda+1} B_{r_{i}}(0,\beta,0).\end{equation}
   

      \end{proof}
      Note the generating function $\frac{e^{\gamma x}}{(2-e^{\beta x})^{\lambda}}$ is a special case of the generating function $\frac{(1+\alpha x)^{\gamma/\alpha}}{(1-y((1+\alpha x)^{\beta/\alpha}-1))^{\lambda}}$ studied in \cite{Higher order geometric polynomials}.
      \begin{remark}

       The interpretation of the numbers $H_n(\lambda,\beta,\gamma)$ given in Theorem~\ref{theorem:6} does two things;  
      
      \noindent \RNum{1}). It is a generalisation of the barred preferential arrangements in \eqref{equation:13} and \eqref{equation:5}. 
      
      \noindent \RNum{2}). Further, it further generalises the answer to the Nelsen Schmidt question we have given in Theorem~\ref{theorem:7} above.\end{remark}  
      
      \begin{remark}
           
            The special case $\lambda=1,\beta=2$, and $\gamma=0$ on Theorem~\ref{theorem:6} is the sequence A216794 in \cite{oeis}.
           \end{remark}
           \begin{remark}
          The statement of Theorem~\ref{theorem:6} for the special case $\lambda=1$ can be derived from that given for the numbers $B_n(\alpha,\beta,\gamma)$ in \cite{On generalised Bell polynomials}.
           \end{remark} 
    \begin{theorem}\label{theorem:8}
      \fontsize{10}{1}For $\beta\in\mathbb{N}$, and $\lambda\geq2$, 
      \begin{equation}\label{equation:14}
       H_n(\lambda,\beta,\beta)=\frac{1}{2}H_n(\lambda-1,\beta,\beta)+\frac{1}{2\beta(\lambda-1)}\sum\limits_{i=0}^{n}\binom{n}{i}H_{i+1}(\lambda-1,\beta,0)\beta^{n-i}.
      \end{equation}
      \end{theorem}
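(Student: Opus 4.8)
The plan is to verify \eqref{equation:14} at the level of exponential generating functions (EGFs), since a sequence is uniquely determined by its EGF and the right-hand side is a binomial convolution built from known EGFs. Write $G_\lambda^\gamma(x)=\frac{e^{\gamma x}}{(2-e^{\beta x})^\lambda}$ for the EGF whose coefficient of $x^n/n!$ is $H_n(\lambda,\beta,\gamma)$, as in the definition preceding Theorem~\ref{theorem:6}. Then the left-hand side of \eqref{equation:14} has EGF $G_\lambda^\beta(x)=\frac{e^{\beta x}}{(2-e^{\beta x})^\lambda}$, and the first term on the right has EGF $\tfrac12 G_{\lambda-1}^\beta(x)=\tfrac12\frac{e^{\beta x}}{(2-e^{\beta x})^{\lambda-1}}$, which is well defined precisely because the hypothesis $\lambda\ge2$ gives $\lambda-1\ge1$. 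It remains only to compute the EGF of the convolution sum and check that the three pieces combine correctly.

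The key step is to recognise the shifted sequence $H_{i+1}(\lambda-1,\beta,0)$. Since $\sum_{m\ge0}H_m(\lambda-1,\beta,0)\frac{x^m}{m!}=(2-e^{\beta x})^{-(\lambda-1)}$ (take $\gamma=0$ in the definition), differentiating in $x$ shifts the index by one and yields $\sum_{i\ge0}H_{i+1}(\lambda-1,\beta,0)\frac{x^i}{i!}=\frac{d}{dx}(2-e^{\beta x})^{-(\lambda-1)}=\beta(\lambda-1)\frac{e^{\beta x}}{(2-e^{\beta x})^\lambda}$. The inner sum $\sum_{i}\binom{n}{i}H_{i+1}(\lambda-1,\beta,0)\beta^{n-i}$ is a binomial convolution of this sequence with $\{\beta^{n-i}\}$, whose EGF is $e^{\beta x}$; hence the convolution has EGF $\beta(\lambda-1)\frac{e^{2\beta x}}{(2-e^{\beta x})^\lambda}$. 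Multiplying by the prefactor $\frac{1}{2\beta(\lambda-1)}$, the factor $\beta(\lambda-1)$ cancels cleanly, so the second term on the right has EGF $\tfrac12\frac{e^{2\beta x}}{(2-e^{\beta x})^\lambda}$.

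Finally I would add the two EGFs over the common denominator $(2-e^{\beta x})^\lambda$ and use $e^{\beta x}(2-e^{\beta x})+e^{2\beta x}=2e^{\beta x}$ to obtain $\tfrac12\,\frac{e^{\beta x}(2-e^{\beta x})+e^{2\beta x}}{(2-e^{\beta x})^\lambda}=\frac{e^{\beta x}}{(2-e^{\beta x})^\lambda}=G_\lambda^\beta(x)$, which is exactly the EGF of the left-hand side; comparing coefficients of $x^n/n!$ then gives \eqref{equation:14}. The only genuinely delicate point is the index shift in the middle step: one must track the factor $\beta(\lambda-1)$ produced by differentiation and verify that it cancels the normalising constant $\frac{1}{2\beta(\lambda-1)}$ appearing in the statement. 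Once this cancellation is observed, the rest is routine EGF algebra.
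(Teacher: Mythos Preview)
Your proof is correct. The generating-function computation is clean: differentiating $(2-e^{\beta x})^{-(\lambda-1)}$ to realise the shift $i\mapsto i+1$, then multiplying by $e^{\beta x}$ for the binomial convolution, and finally using $e^{\beta x}(2-e^{\beta x})+e^{2\beta x}=2e^{\beta x}$ all check out, and the cancellation of $\beta(\lambda-1)$ against the prefactor $\tfrac{1}{2\beta(\lambda-1)}$ is exactly as you say.

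The paper, however, takes a genuinely different route. It first clears denominators to rewrite the identity as
\[
2\beta(\lambda-1)H_n(\lambda,\beta,\beta)=\beta(\lambda-1)H_n(\lambda-1,\beta,\beta)+\sum_{i=0}^{n}\binom{n}{i}H_{i+1}(\lambda-1,\beta,0)\beta^{n-i},
\]
and then proves this bijectively. The left-hand side is interpreted as the set $\mathcal{H}^*_n(\lambda,\beta,\beta)$ of barred preferential arrangements decorated by a choice of one of the $\beta$ compartments, one of the last $\lambda-1$ bars, and a mark $0$ or $1$ on that bar. A case-by-case rule (depending on the mark and on what sits immediately left of the chosen bar) either deletes the bar or replaces it by a block containing a new element $n+1$; the images of this map are then counted by the two terms on the right. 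What your analytic argument buys is speed and a guarantee that nothing has been double-counted; what the paper's bijection buys is a structural explanation of where the factor $2\beta(\lambda-1)$ comes from and why the shifted index $i+1$ appears, namely from the insertion of the extra element. Both are complete proofs, but they illuminate different aspects of the identity.
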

      \begin{proof}
      First, we write the formula in a combinatorially nicer form. 
      \fontsize{10}{1}
\[2\beta(\lambda -1)H_n(\lambda,\beta,\beta)=\beta(\lambda-1)H_n(\lambda-1, \beta,\beta)+\sum_{i=0}^ {n}\binom{n}{i}H_{i+1}(\lambda-1, \beta, 0)\beta^{n-i}\]
Consider the set of elements of $\mathcal{H}_n(\lambda, \beta, \beta)$ such that one of the $\beta$ compartments is colored red and one of the $\lambda$ bars, except the first one, is marked with a $0$ or a $1$. We let $\mathcal{H}^*_n(\lambda, \beta, \beta)$ denote the set of the so obtained decorated barred preferential arrangements. The left hand side of the equality enumerates this set. We describe a map, that associates to each decorated preferential arrangement of  $\mathcal{H}^*_n(\lambda, \beta, \beta)$ another barred preferential arrangement so that the image of the map is a set enumerated by the right hand side. Consider the label of the chosen bar.  If the bar has a $0$, delete the bar and insert a block with a single extra $(n+1)$th element. 
If the bar is labelled by $1$, consider what is right next to the left of the bar. If there is a block, insert $(n+1)$ into this block, if it is another bar, delete this bar. In each cases when inserting $(n+1)$, it is also colored red, i.e., receives the same $\beta$-compartment that is chosen.  The number of barred preferential arrangements that we obtain by deleting a bar, (and not inserting $(n+1)$) is $\beta(\lambda -1)H_n(\lambda-1,\beta, \beta)$, since one $\beta$-compartment is still colored red, and we have only $\lambda-1$  with a $1$ marked bar left. In the other cases, we obtain a barred preferential arrangement on $n+1$ elements, i.e., elements of the set $\mathcal{H}_{n+1}(\lambda-1,\beta,\beta)$, such that the first special section does not contain the $(n+1)$th element. This is, because the first bar was not marked, hence during the insertion process $(n+1)$ was never put into the section left to the first bar. 
The number of these barred preferential elements is 
$\sum_{i=0}^{n}\binom{n}{i}H_{i+1}(\lambda-1, \beta, 0)\beta^{n-i}$. We obtain this formula according to the enumeration of the following pairs: choose the $n-i$ elements for the first special block and construct it in $\beta^{n-i}$ ways. Combine these blocks with barred preferential arrangements on $(i+1)$ elements with $\lambda-1$ sections and empty first, special section, for which we have $H_{i+1}(\lambda-1, \beta, 0)$ possibilities. 
      \end{proof}
   Theorem~\ref{theorem:8} is a generalisation of Theorem~1 of \cite{barred:2013}.
   

\begin{theorem} For $\gamma,\lambda\in\mathbb{N}_0$, where $(\lambda,\gamma)\not=(0,0)$,
\fontsize{10}{1}
\begin{equation}
 H_{n+1}(\lambda,\beta,\gamma)=\gamma H_n(\lambda,\beta,\gamma)+\lambda\beta\sum_{i=0}^{n}\binom{n}{i}H_i(1,\beta,\beta)H_{n-i}(\lambda,\beta,\gamma). 
\end{equation}
\end{theorem}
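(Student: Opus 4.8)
The plan is to read the identity off the exponential generating function and then to give it the bijective meaning that the preceding theorems favour. Writing $D=2-e^{\beta x}$, recall that $\sum_n H_n(\lambda,\beta,\gamma)\frac{x^n}{n!}=e^{\gamma x}D^{-\lambda}$. The first step is simply to differentiate this series in $x$; since $D'=-\beta e^{\beta x}$, the product and chain rules give
\[
\frac{d}{dx}\bigl(e^{\gamma x}D^{-\lambda}\bigr)=\gamma\,e^{\gamma x}D^{-\lambda}+\lambda\beta\,\bigl(e^{\beta x}D^{-1}\bigr)\bigl(e^{\gamma x}D^{-\lambda}\bigr).
\]
The second step is to recognise the two factors in the last term: $e^{\beta x}D^{-1}=\sum_n H_n(1,\beta,\beta)\frac{x^n}{n!}$ and $e^{\gamma x}D^{-\lambda}=\sum_n H_n(\lambda,\beta,\gamma)\frac{x^n}{n!}$. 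Comparing coefficients of $\frac{x^n}{n!}$ then finishes the computation: the left side contributes $H_{n+1}(\lambda,\beta,\gamma)$, the first term on the right contributes $\gamma H_n(\lambda,\beta,\gamma)$, and the binomial convolution of the two series in the last term contributes exactly $\lambda\beta\sum_{i}\binom{n}{i}H_i(1,\beta,\beta)H_{n-i}(\lambda,\beta,\gamma)$, which is the asserted recursion.

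To match the combinatorial style of Theorems~\ref{theorem:3} and~\ref{theorem:4}, the second half of the plan is to interpret the same three terms as a classification of $\mathcal{H}_{n+1}(\lambda,\beta,\gamma)$ by the position of the largest element $n+1$. If $n+1$ lies in the special left-most section it only needs one of the $\gamma$ compartments, and deleting it leaves an arbitrary member of $\mathcal{H}_n(\lambda,\beta,\gamma)$, accounting for the term $\gamma H_n(\lambda,\beta,\gamma)$. Otherwise $n+1$ sits in a block $B^*$ inside one of the $\lambda$ property-\ref{property:1} sections and carries one of $\beta$ compartment-labels, which is the explicit factor $\beta$. I would decompose such an arrangement by cutting the section containing $B^*$ just before and just after $B^*$: the pair consisting of $B^*$ with $n+1$ deleted, regarded as a special $\beta$-labelled block, together with the suffix of that section, is read as an element of $\mathcal{H}_i(1,\beta,\beta)$, while the remaining data (the $\gamma$-special section, the other sections, and the prefix of the cut section, reassembled with $\lambda$ bars) is read as an element of $\mathcal{H}_{n-i}(\lambda,\beta,\gamma)$; the $\binom{n}{i}$ records the splitting of the remaining $n$ elements.

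The step I expect to be the main obstacle is justifying the factor $\lambda$ and confirming that this decomposition is a genuine bijection rather than a counting heuristic. The subtlety is that the large piece in $\mathcal{H}_{n-i}(\lambda,\beta,\gamma)$ retains $\lambda$ property-\ref{property:1} sections but no longer records which of them the small piece was split off from, so to invert the map one must choose one of these $\lambda$ sections into which the small piece is re-merged (its special block becoming $B^*$ once $n+1$ is re-inserted with its recorded label, and its property-\ref{property:1} section becoming the blocks that follow $B^*$). I would verify that this re-merging produces a valid preferential arrangement of that section and that the forward and backward maps are mutually inverse, recovering $B^*$ as the block containing $n+1$ and the index $j$ as the section of $n+1$; summing over $i$ and over the $\lambda$ choices then yields the second term. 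As a sanity check I would confirm that the case $\lambda=1$ reduces to Theorem~\ref{theorem:4}, the two convolutions agreeing after the symmetry $\sum_i\binom{n}{i}a_ib_{n-i}=\sum_i\binom{n}{i}a_{n-i}b_i$.
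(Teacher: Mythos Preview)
Your proposal is correct. The combinatorial half of your argument---classifying $\mathcal{H}_{n+1}(\lambda,\beta,\gamma)$ by the block $B^*$ containing $n+1$, taking $B^*\setminus\{n+1\}$ as the special $\beta$-block and the blocks to its right in that section as the property-\ref{property:1} part of an $\mathcal{H}_i(1,\beta,\beta)$ piece, and reading everything else as an $\mathcal{H}_{n-i}(\lambda,\beta,\gamma)$ piece with the factor $\lambda$ recording which section was cut---is exactly the paper's proof; your discussion of the bijectivity and of the $\lambda=1$ sanity check against Theorem~\ref{theorem:4} is more explicit than what the paper writes, but the underlying decomposition is identical.

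The generating-function half, differentiating $e^{\gamma x}(2-e^{\beta x})^{-\lambda}$ and recognising the factor $e^{\beta x}(2-e^{\beta x})^{-1}$ as the EGF of $H_n(1,\beta,\beta)$, is a genuinely different route that the paper does not take for this theorem. It is shorter and entirely mechanical, which is its advantage; the paper opts solely for the bijective argument because it keeps the combinatorial structure visible and parallels the proofs of the surrounding results. Either proof stands on its own.
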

   \begin{proof}
  Let $\mathcal{H}_n(\lambda, \beta, \gamma)$ denote the set of barred preferential arrangements with $\lambda$ bars (so $\lambda+1$ sections), such that the first section includes one special block with elements labelled from the set $\{1,\ldots, \gamma\}$, and the elements in the rest of the blocks labelled from the set $\{1,\ldots, \beta\}$. We enumerate the set $\mathcal{H}_{n+1}(\lambda, \beta, \gamma)$  based on the position of the element $(n+1)$. It can be included in the special first block, which gives $\gamma H_n(\lambda,\beta,\gamma)$ possibilities Otherwise, let $B^*$ be the block that contains $(n+1)$. Consider the portion of the barred preferential arrangement from $B^*$ till the next bar to its right (including $B^*$ itself), and let $i$ be the number of elements contained in these blocks. This portion can be seen as a barred preferential arrangement from the set $\mathcal{H}_i(\beta,\beta)=\mathcal{H}_i(1,\beta, \beta)$. Ignoring this portion of the barred preferential arrangements, the remaining elements form a barred preferential arrangements from $H_{n-i}(\lambda, \beta, \gamma)$. For this construction we need to choose the $i$ elements out of the $n$ elements in $\binom{n}{i}$ ways, the section in that $B^*$ is placed in $\lambda$ ways and finally, the label of $(n+1)$ in $\beta$ ways. Multiplying these together and summing up completes the argument. 
   \end{proof}

\begin{theorem}\label{theo:old11}For $\gamma,\lambda\in\mathbb{N}_0$, where $(\lambda,\gamma)\not=(0,0)$,
\begin{equation}
H_{n+1}(\lambda,\beta, \gamma)=\gamma H_n(\lambda,\beta, \gamma )+\lambda\beta H_n(\lambda +1, \beta, \gamma+\beta).
\end{equation}
\end{theorem}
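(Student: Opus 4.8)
The plan is to read the recursion directly off the exponential generating function, which here is considerably quicker than a bijection. Write $F(x)=\sum_{n\ge 0}H_n(\lambda,\beta,\gamma)\frac{x^n}{n!}=\frac{e^{\gamma x}}{(2-e^{\beta x})^{\lambda}}$, the defining generating function of the numbers $H_n(\lambda,\beta,\gamma)$. The key elementary observation is that differentiating an exponential generating function shifts the index by one: since $F'(x)=\sum_{n\ge 0}H_{n+1}(\lambda,\beta,\gamma)\frac{x^n}{n!}$, the quantity $H_{n+1}(\lambda,\beta,\gamma)$ that we wish to compute is exactly $[\frac{x^n}{n!}]F'(x)$.

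First I would differentiate $F$ by the product and chain rules. A direct computation gives $F'(x)=\gamma\,\frac{e^{\gamma x}}{(2-e^{\beta x})^{\lambda}}+\lambda\beta\,\frac{e^{(\gamma+\beta)x}}{(2-e^{\beta x})^{\lambda+1}}$, where the first summand is simply $\gamma F(x)$ and the second arises from the derivative of $(2-e^{\beta x})^{-\lambda}$. The whole proof then reduces to recognising each summand. The first is the generating function of $\gamma H_n(\lambda,\beta,\gamma)$. For the second, I would observe that $\frac{e^{(\gamma+\beta)x}}{(2-e^{\beta x})^{\lambda+1}}$ is precisely the defining generating function of the numbers $H_n(\lambda+1,\beta,\gamma+\beta)$, obtained by replacing $\lambda$ by $\lambda+1$ and $\gamma$ by $\gamma+\beta$ in the definition. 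Extracting the coefficient of $\frac{x^n}{n!}$ from both sides then yields $H_{n+1}(\lambda,\beta,\gamma)=\gamma H_n(\lambda,\beta,\gamma)+\lambda\beta H_n(\lambda+1,\beta,\gamma+\beta)$, which is the assertion.

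Since the surrounding results are proved combinatorially, I would also indicate why the statement is consistent with the preceding recursion. Comparing the two formulas, the theorem is equivalent to the convolution identity $\sum_{i=0}^{n}\binom{n}{i}H_i(1,\beta,\beta)H_{n-i}(\lambda,\beta,\gamma)=H_n(\lambda+1,\beta,\gamma+\beta)$. This is immediate from generating functions, since the left-hand side is a binomial convolution whose exponential generating function is the product $\frac{e^{\beta x}}{2-e^{\beta x}}\cdot\frac{e^{\gamma x}}{(2-e^{\beta x})^{\lambda}}=\frac{e^{(\gamma+\beta)x}}{(2-e^{\beta x})^{\lambda+1}}$, the generating function of the right-hand side. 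Combinatorially one can realise this identity by merging the two special blocks of an $\mathcal{H}(\lambda,\beta,\gamma)$-arrangement and an $\mathcal{H}(1,\beta,\beta)$-arrangement into a single special block with $\gamma+\beta$ compartments, and concatenating their ordinary sections.

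The main obstacle appears only if one insists on a direct bijective proof on $\mathcal{H}_{n+1}(\lambda,\beta,\gamma)$ itself, enumerating by the position of the element $n+1$. The easy case places $n+1$ in the special first block, contributing $\gamma H_n(\lambda,\beta,\gamma)$. In the remaining case, where $n+1$ sits in an ordinary block $B^*$, one must produce a structure counted by $\lambda\beta H_n(\lambda+1,\beta,\gamma+\beta)$; the delicate point is that deleting $n+1$ may empty $B^*$, so one cannot simply remove it while keeping the ordinary blocks non-empty, and one must simultaneously account for the extra bar and the enlargement of the special section from $\gamma$ to $\gamma+\beta$ compartments. The generating-function route, or the convolution identity above, avoids this bookkeeping entirely, and that is the approach I would commit to.
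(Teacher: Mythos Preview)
Your generating-function argument is correct and complete: differentiating $F(x)=e^{\gamma x}(2-e^{\beta x})^{-\lambda}$ immediately produces the two summands with the right parameters, and coefficient extraction finishes the job.

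The paper, however, proves this theorem bijectively, precisely the route you flag as an obstacle. Its resolution of the bookkeeping you describe is worth knowing. When $(n+1)$ lies in an ordinary block $B^*$, the paper writes the section containing $B^*$ as $B_1\,B^*\,B_2$, then (i) removes the whole block $B^*$ and merges it with the special block $\Gamma$, so the special block now has $\gamma+\beta$ compartments, (ii) inserts a new bar in the gap between $B_1$ and $B_2$, and (iii) deletes $(n+1)$. This yields an element of $\mathcal{H}_n(\lambda+1,\beta,\gamma+\beta)$; the factor $\lambda$ records which of the resulting bars is the inserted one (equivalently, which of the original $\lambda$ sections contained $B^*$), and the factor $\beta$ records the compartment label of $(n+1)$. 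The issue of $B^*$ becoming empty never arises, because $B^*$ is moved wholesale rather than having $(n+1)$ deleted in place. Your approach is quicker and requires no invention; the paper's bijection, on the other hand, explains structurally where the shift $\gamma\mapsto\gamma+\beta$ and the extra bar come from.
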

\begin{proof}
Again, the left hand side is the size of the set $\mathcal{H}_{n+1}(\lambda, \beta, \gamma)$. Consider the $(n+1)$th element. If it is contained in the first section, (let's denote this block by $\Gamma$), then there are $\gamma H_n(\lambda, \beta,  \gamma)$ possibilities to obtain such a barred preferential arrangement on $n+1$ elements from a one on $n$ elements. 
Assume now that the $(n+1)$th  element is in a block, say $B^*$, with $\beta$ compartments. Decompose the section including $B^*$ as $B_1B^*B_2$, where $B_1$ and $B_2$ are ordered partitions with the extra structure of having a label for each element from $[\beta]$ on each block. We reorder the parts of this barred preferential arrangement as follows: Move the block $B^*$ to the left of the first block, and merge $\Gamma$ and $B^*$ into one block. Insert instead of the block $B^*$ a bar between the sequences of blocks $B_1$ and $B_2$, and finally, delete $(n+1)$. We obtain this way a barred preferential arrangement on $n$ elements, with $(\lambda +1)$ bars and $(\gamma+\beta)$ compartments in the first, special block. Hence, the number of such barred preferential arrangements is $H_n(\lambda+1, \beta, \gamma+\beta)$. There are two information that we have to keep in track: which $\beta$ compartment was the $(n+1)$th element assigned to, and which bar is the inserted bar. Hence, we have $\lambda\beta H_n(\lambda +1, \beta, \gamma+\beta)$ as total number of barred preferential arrangements on $n+1$ elements such that the $(n+1)$th element is not in the first, special block. 
\end{proof}

   \begin{theorem}\label{theorem:9}For $\gamma,\lambda\in\mathbb{N}$,
   \begin{equation}\label{equation:15}
   H_n(\lambda,\beta,\gamma+\beta)=2H_n(\lambda,\beta,\gamma)-H_n(\lambda-1,\beta,\gamma).
   \end{equation}
   \end{theorem}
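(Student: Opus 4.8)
The plan is to read the identity straight off the defining generating functions rather than to argue bijectively, since the minus sign on the right-hand side is awkward to realise by a single sign-free bijection. Recall that by definition $\sum_{n\ge 0}H_n(\lambda,\beta,\gamma)\,x^n/n!=e^{\gamma x}/(2-e^{\beta x})^{\lambda}$, and that here $\lambda\ge 1$, so that $\lambda-1\ge 0$ and the term $H_n(\lambda-1,\beta,\gamma)$ is a legitimate object (for $\lambda=1$ it is just the single property~\ref{property:2} section counted by $e^{\gamma x}$). Everything will follow from comparing coefficients of $x^n/n!$ in one elementary algebraic identity among these three generating functions.

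The key step is the trivial splitting $e^{\beta x}=2-(2-e^{\beta x})$. First I would multiply this through by the common factor $e^{\gamma x}/(2-e^{\beta x})^{\lambda}$, which gives
\[
\frac{e^{(\gamma+\beta)x}}{(2-e^{\beta x})^{\lambda}}
  =2\,\frac{e^{\gamma x}}{(2-e^{\beta x})^{\lambda}}
   -\frac{e^{\gamma x}}{(2-e^{\beta x})^{\lambda-1}},
\]
where on the left I have used $e^{\gamma x}e^{\beta x}=e^{(\gamma+\beta)x}$, and on the right the factor $(2-e^{\beta x})$ cancels one power of the denominator. The three generating functions appearing here are exactly those of $H_n(\lambda,\beta,\gamma+\beta)$, $H_n(\lambda,\beta,\gamma)$ and $H_n(\lambda-1,\beta,\gamma)$, so extracting $\left[\frac{x^n}{n!}\right]$ from both sides yields $H_n(\lambda,\beta,\gamma+\beta)=2H_n(\lambda,\beta,\gamma)-H_n(\lambda-1,\beta,\gamma)$, as claimed. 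The manipulation is valid as an identity of formal power series, and I do not expect any genuine obstacle on this side; the only care needed is the index bookkeeping noted above.

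If instead one insists on a combinatorial proof consistent with the rest of this section, the difficulty concentrates entirely in the minus sign. One would rewrite the statement as $H_n(\lambda,\beta,\gamma+\beta)+H_n(\lambda-1,\beta,\gamma)=2H_n(\lambda,\beta,\gamma)$ and then interpret the two left-hand terms: the $(\gamma+\beta)$ colours of the special section of $\mathcal{H}_n(\lambda,\beta,\gamma+\beta)$ split into $\gamma$ ``old'' colours and $\beta$ ``new'' ones, while $H_n(\lambda-1,\beta,\gamma)$ embeds into $\mathcal{H}_n(\lambda,\beta,\gamma)$ as those $\lambda$-bar arrangements whose second section is empty (insert an empty property~\ref{property:1} section just after the first bar). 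Matching the resulting two families against two marked copies of $\mathcal{H}_n(\lambda,\beta,\gamma)$, coming from the factor $2$, is where all the real work would lie, and it is precisely this bookkeeping that the generating-function argument sidesteps; for that reason I would present the generating-function derivation as the proof.
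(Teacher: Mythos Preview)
Your generating-function argument is correct and clean; the splitting $e^{\beta x}=2-(2-e^{\beta x})$ does all the work, and the coefficient extraction is immediate.

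The paper, however, gives exactly the combinatorial proof you sketch in your final paragraph and then set aside. It splits the $\gamma+\beta$ labels of the special section into $\{1,\ldots,\gamma\}$ and $\{\gamma+1,\ldots,\gamma+\beta\}$; arrangements using only the first $\gamma$ labels contribute $H_n(\lambda,\beta,\gamma)$, and for the rest one moves the elements carrying a label from $\{\gamma+1,\ldots,\gamma+\beta\}$ across the first bar to form a new leading block of the second section. The image is precisely those elements of $\mathcal{H}_n(\lambda,\beta,\gamma)$ whose second section is nonempty, and these are counted by $H_n(\lambda,\beta,\gamma)-H_n(\lambda-1,\beta,\gamma)$, which is where the minus sign enters without any sign-reversing involution. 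So the ``real work'' you anticipated is in fact a one-line observation: the subtraction just excludes the empty-second-section case. Your algebraic proof is shorter and requires no interpretation at all; the paper's proof has the advantage of staying within the bijective narrative of the surrounding section and explaining structurally why the $2$ and the $-1$ appear.
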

   \begin{proof}
   Consider the set $\mathcal{H}_n(\lambda, \beta, \gamma+\beta)$. In these barred preferential arrangements the elements in the first block are labelled from the set  $\{1,2,\ldots, \gamma, \gamma+1, \ldots, \gamma+\beta\}$. The number of such barred preferential arrangements that have only labels from the set $\{1, \ldots, \gamma\}$ is $H_{n}(\lambda, \beta, \gamma)$. If there is at least one element with a label from $\{\gamma +1, \ldots, \gamma+\beta\}$, then move these elements to the right of the first bar, to create the first block in the ordered partition of the second section. We obtain this way a barred preferential arrangement with $\gamma$ compartments in the first, special block and at least one block in the second section with $\beta$ compartments. How many such barred preferential are there? $H_n(\lambda, \beta, \gamma)-H_{n}(\lambda-1, \beta, \gamma)$, since we need to exclude the barred preferential arrangements that do not have any block in the second section, which are clearly in bijection with barred preferential arrangements with one less, i.e., $(\lambda-1)$ bars.
   \end{proof}
   
   Theorem~\ref{theorem:9} is a generalisation of Theorem~9 of \cite{Nkonkobe & Murali Nelesn-Schmidt generating function}.
   
   \begin{theorem}For $\beta\in\mathbb{N}$,and $\lambda\geq 2$, 
   \begin{equation}
   H_n(\lambda,\beta,0)=\frac{1}{2\beta(\lambda-1)}H_{n+1}(\lambda-1,\beta,0)+\frac{1}{2}H_n(\lambda-1,\beta,0).
   \end{equation}
   \end{theorem}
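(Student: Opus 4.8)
The plan is to prove this identity at the level of exponential generating functions, exploiting a simple differential relation between consecutive powers of $2-e^{\beta x}$. Write $F_\lambda(x) = (2-e^{\beta x})^{-\lambda}$, so that by definition $H_n(\lambda,\beta,0) = \left[\frac{x^n}{n!}\right]F_\lambda(x)$, and likewise for $\lambda-1$. The hypothesis $\lambda\ge 2$ guarantees $\lambda-1\ge 1$, so every object appearing is a genuine generating function and the division by $\lambda-1$ is legitimate.

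First I would differentiate $F_{\lambda-1}$. A direct computation gives
$$F_{\lambda-1}'(x) = (\lambda-1)\beta\, e^{\beta x}\,(2-e^{\beta x})^{-\lambda} = (\lambda-1)\beta\, e^{\beta x} F_\lambda(x).$$
The key algebraic manoeuvre is then to write $e^{\beta x} = 2-(2-e^{\beta x})$, which turns the factor $e^{\beta x}F_\lambda(x)$ into $2F_\lambda(x) - F_{\lambda-1}(x)$. Substituting this back and solving for $F_\lambda$ yields
$$F_\lambda(x) = \frac{1}{2\beta(\lambda-1)}F_{\lambda-1}'(x) + \frac12 F_{\lambda-1}(x),$$
which is already the claimed identity at the level of generating functions.

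The final step is to read off the coefficient of $x^n/n!$ on both sides. The only point that requires care is the effect of differentiation on an exponential generating function: if $G(x)=\sum_m a_m x^m/m!$ then $G'(x) = \sum_m a_{m+1}x^m/m!$, so extracting $[x^n/n!]$ from $F_{\lambda-1}'$ produces $H_{n+1}(\lambda-1,\beta,0)$ rather than $H_n(\lambda-1,\beta,0)$. This index shift is precisely what accounts for the $H_{n+1}$ term on the right-hand side; taking the coefficients of the remaining two terms is immediate and completes the argument.

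I do not anticipate a genuine obstacle here: the proof reduces to a one-line differential identity for $(2-e^{\beta x})^{-\lambda}$ together with the standard index shift for derivatives of exponential generating functions. Should a purely combinatorial proof in the spirit of Theorem~\ref{theorem:8} be preferred, one would instead decorate the arrangements counted by $2\beta(\lambda-1)H_n(\lambda,\beta,0)$ (a red-coloured compartment together with a $0/1$-marked non-initial bar) and construct an insertion/deletion map onto the arrangements enumerated by the right-hand side. The bookkeeping in that approach is appreciably more delicate than the generating-function computation, so I would favour the analytic route and, if desired, remark afterwards on the combinatorial interpretation.
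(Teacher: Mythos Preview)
Your argument is correct: the differential identity $F_{\lambda-1}'(x)=(\lambda-1)\beta\,e^{\beta x}F_\lambda(x)$ together with $e^{\beta x}=2-(2-e^{\beta x})$ gives the generating-function relation immediately, and the index shift under differentiation produces the $H_{n+1}$ term exactly as you say. The paper, however, proceeds purely bijectively, along the lines you sketch in your final paragraph: it rewrites the identity as $2\beta(\lambda-1)H_n(\lambda,\beta,0)=\beta(\lambda-1)H_n(\lambda-1,\beta,0)+H_{n+1}(\lambda-1,\beta,0)$ and interprets the left-hand side as decorated arrangements in $\mathcal{H}^*_n(\lambda,\beta,0)$ (one $\beta$-compartment coloured red, one non-initial bar marked $0$ or $1$), then applies the same insert-$(n{+}1)$/delete-bar map used in Theorem~\ref{theorem:8}, noting that here the first section is empty so no splitting of the special block is needed. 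Your analytic route is shorter and makes transparent why the factor $2\beta(\lambda-1)$ and the index shift arise; the paper's bijection, on the other hand, explains the identity at the level of the combinatorial objects themselves and exhibits the present theorem as the $\gamma=0$ specialisation of the decorated-arrangement mechanism behind Theorem~\ref{theorem:8}.
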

   \begin{proof}
 This proof is similar to that of Theorem~\ref{theorem:8}. We rewrite the identity as
\[2\beta(\lambda -1)H_n(\lambda, \beta, 0)=\beta(\lambda-1)H_n(\lambda-1, \beta, 0)+H_{n+1}(\lambda-1, \beta,0)\]
The left hand side is the number of decorated barred preferential arrangements of $\mathcal{H}^*_n(\lambda, \beta, 0)$, (with empty first section). Inserting $n+1$ according to the above rule, the deletion of the marked bar without inserting $n+1$ leads to barred preferential arrangements on $n$ with one $\beta$ compartment chosen and one of its $\lambda-1$ bars marked. This gives $\beta(\lambda-1)H_n(\lambda-1, \beta, 0)$ possibilities. Deleting the marked bar and inserting $(n+1)$ leads to barred preferential arrangements on $n+1$ elements, $\lambda -1$ bars, and empty first section, for which we have $H_{n+1}(\lambda-1, \beta, 0)$ possibilities. 
   \end{proof}
   
   \end{section}
   
     The following theorem offers a generalisation of Nelsen's Theorem discussed in \eqref{equation:1}. 
              \begin{theorem}\label{theorem:30}For  $\beta,\gamma,\lambda\in\mathbb{R}$, and $n\in\mathbb{N}_0$, where $(\lambda,\gamma)\not=(0,0)$, \fontsize{10}{1} 
               \begin{equation}\sum\limits_{k=0}^{n}\sum\limits_{s=0}^{k}\binom{k}{s}(-1)^{k-s}H_n(\lambda-1,\beta,\gamma+\beta s)=\sum\limits_{s=0}^{\infty}\frac{ H_n(\lambda-1,\beta,\gamma+\beta s) }{2^{s+1}}.\end{equation} 
              \end{theorem}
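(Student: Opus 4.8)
The plan is to show that each side of the claimed identity equals $H_n(\lambda,\beta,\gamma)$ by passing to exponential generating functions and collapsing a geometric series on each side. Throughout I read the defining relation
\[
H_n(\mu,\beta,\delta)=\left[\frac{x^n}{n!}\right]\frac{e^{\delta x}}{(2-e^{\beta x})^{\mu}}
\]
as an identity of formal power series, expanding $(2-e^{\beta x})^{-\mu}=(1-(e^{\beta x}-1))^{-\mu}$ by the binomial series so that every manipulation is coefficient-wise meaningful even for real $\lambda$.

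First I would treat the right-hand side. Setting $u=e^{\beta x}$, each summand reads $H_n(\lambda-1,\beta,\gamma+\beta s)=\left[\frac{x^n}{n!}\right]\dfrac{e^{\gamma x}u^{s}}{(2-u)^{\lambda-1}}$, so the series on the right is the coefficient functional applied to $\dfrac{e^{\gamma x}}{2(2-u)^{\lambda-1}}\sum_{s\ge 0}(u/2)^{s}$. Summing the geometric series gives $\sum_{s\ge 0}(u/2)^{s}=2/(2-u)$, and the right-hand side collapses to $\left[\frac{x^n}{n!}\right]\dfrac{e^{\gamma x}}{(2-u)^{\lambda}}=H_n(\lambda,\beta,\gamma)$. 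The exchange of the coefficient functional with the $s$-sum is legitimate because $H_n(\lambda-1,\beta,\gamma+\beta s)$ is a polynomial in $s$ of degree at most $n$, so the factor $2^{-s-1}$ forces absolute convergence.

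Next I would treat the left-hand side. For fixed $k$ the binomial theorem gives
\[
\sum_{s=0}^{k}\binom{k}{s}(-1)^{k-s}u^{s}=(u-1)^{k}=(e^{\beta x}-1)^{k},
\]
so the double sum becomes $\dfrac{e^{\gamma x}}{(2-u)^{\lambda-1}}\sum_{k=0}^{n}(e^{\beta x}-1)^{k}$. The key observation is that $(e^{\beta x}-1)^{k}$ has lowest-degree term of order $x^{k}$; hence every term with $k>n$ contributes nothing to the coefficient of $x^{n}$, and I may replace $\sum_{k=0}^{n}$ by $\sum_{k=0}^{\infty}$ without altering $\left[\frac{x^n}{n!}\right]$. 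Summing the resulting geometric series, $\sum_{k\ge 0}(e^{\beta x}-1)^{k}=1/(2-e^{\beta x})$, collapses the left-hand side to $\left[\frac{x^n}{n!}\right]\dfrac{e^{\gamma x}}{(2-e^{\beta x})^{\lambda}}=H_n(\lambda,\beta,\gamma)$ as well, which proves the identity.

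The main obstacle is bookkeeping rather than genuine difficulty: one must justify the two series manipulations as formal power series, namely the truncation-to-infinity on the left (resting on the vanishing of the low-order coefficients of $(e^{\beta x}-1)^{k}$) and the convergence of the geometric series on the right (resting on the polynomiality of $H_n(\lambda-1,\beta,\gamma+\beta s)$ in $s$). Once these are in place the two collapses are mirror images, and recognising that both sides reduce to the single quantity $H_n(\lambda,\beta,\gamma)$ is exactly what makes the statement a clean generalisation of Nelsen's Theorem: the specialisation $\lambda=\beta=1$ gives $H_n(0,1,\gamma+s)=(\gamma+s)^{n}$, while by Lemma~\ref{lemma:2} the common value $H_n(1,1,\gamma)$ equals $\tfrac12\sum_{s\ge0}(\gamma+s)^{n}/2^{s}$, recovering \eqref{equation:1}.
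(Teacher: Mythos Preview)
Your proof is correct and follows essentially the same route as the paper: both show each side equals $H_n(\lambda,\beta,\gamma)$ by expanding the extra factor $1/(2-e^{\beta x})$ once as $\sum_{k\ge 0}(e^{\beta x}-1)^k$ and once as $\tfrac12\sum_{s\ge 0}(e^{\beta x}/2)^s$, then extracting the coefficient of $x^n/n!$. You supply more explicit justification for the truncation and convergence steps than the paper does, but the argument is the same.
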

          \begin{proof} $\frac{e^{\gamma x}}{(2-e^{\beta x})^\lambda}=\frac{e^{\gamma x}}{(2-e^{\beta x})^{\lambda-1}}\sum\limits_{k=0}^{\infty}(e^{\beta x}-1)^k$. 
        \\$\implies\begin{bmatrix}\frac{x^n}{n!}\end{bmatrix}\frac{e^{\gamma x}}{(2-e^{\beta x})^\lambda}=\sum\limits_{k=0}^{n}\sum\limits_{s=0}^{k}\binom{k}{s}(-1)^{k-s}H_n(\lambda-1,\beta,\gamma+\beta s)$
       \\Also, $\frac{e^{\gamma x}}{(2-e^{\beta x})^\lambda}=\frac{1}{2}\frac{e^{\gamma x}}{(2-e^{\beta x})^{\lambda-1}}\sum\limits_{s=0}^{\infty}\frac{e^{xs\beta}}{2^s}$. 
       \\$\implies\begin{bmatrix}\frac{x^n}{n!}\end{bmatrix}\frac{e^{\gamma x}}{(2-e^{\beta x})^\lambda}=\frac{1}{2}\sum\limits_{s=0}^{\infty}\frac{H_n(\lambda-1,\beta,\gamma+\beta s)}{2^s}$. 
        \end{proof}
        The argument used in Theorem~\ref{theorem:30} is a generalisation of that used in proving Equations 2 and 4 of \cite{gross:1962}.
         
         
         \begin{theorem}\label{theorem:28} For $\gamma,n\in \mathbb{N}_0$ and $\beta,\lambda\in \mathbb{N}$, the number of barred preferential arrangements of $X_n$ having $\lambda$ bars. Where the first section has $\gamma+k\beta$ compartments ($0\leq k\leq n$) and the remaining $\lambda-1$ sections have property~\ref{property:1}, such that none of the $k$ parts each having $\beta$ compartments is empty,
         
         \begin{equation}\label{equation:24}\sum\limits_{k=0}^{n}\sum\limits_{s=0}^{k}(-1)^{k-s}\binom{k}{s}H_n(\lambda-1,\beta,\gamma+\beta s).\end{equation}
         
         \end{theorem}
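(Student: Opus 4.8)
The plan is to establish the claimed count directly by inclusion--exclusion, treating the exponent $k$ as a fixed parameter and then summing over its admissible range. First I would fix $k$ with $0\le k\le n$ and spell out the objects being counted: a barred preferential arrangement whose special first section carries $\gamma+k\beta$ labelled compartments, organised as $\gamma$ ``ordinary'' compartments together with $k$ distinguished parts of $\beta$ compartments each, subject to the restriction that every one of these $k$ parts receives at least one element, while the remaining $\lambda-1$ sections carry property~\ref{property:1}. The generating-function identity underlying Theorem~\ref{theorem:30} already tells me which quantity the whole double sum must equal, so the combinatorial task is to read the inner alternating sum as a restricted count.

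The key step is to recognise, via Theorem~\ref{theorem:6}, that if one drops the non-emptiness restriction and instead allows the special section to use only $\gamma+\beta s$ of its compartments, the number of resulting arrangements is exactly $H_n(\lambda-1,\beta,\gamma+\beta s)$. I would then run inclusion--exclusion over the events $A_1,\dots,A_k$, where $A_j$ is the event that the $j$th distinguished part is empty. Forcing a fixed subset of $k-s$ of these parts to be empty leaves $\gamma+\beta s$ usable compartments in the special section, hence contributes $H_n(\lambda-1,\beta,\gamma+\beta s)$ arrangements; there are $\binom{k}{k-s}=\binom{k}{s}$ ways to choose which parts are suppressed, and the inclusion--exclusion sign is $(-1)^{k-s}$. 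Collecting terms gives
\[
\sum_{s=0}^{k}(-1)^{k-s}\binom{k}{s}H_n(\lambda-1,\beta,\gamma+\beta s)
\]
as the number of admissible arrangements for that fixed $k$.

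Finally I would sum over $k$. Since the $k$ distinguished non-empty parts must together absorb at least $k$ of the $n$ elements, no arrangement exists for $k>n$, so $k$ ranges over $0,1,\dots,n$; moreover arrangements associated with different values of $k$ have special sections of genuinely different compartment totals $\gamma+k\beta$, so they are counted without overlap, and summing the inner expression over $k$ yields the stated double sum. I expect the only real obstacle to be the bookkeeping of the inclusion--exclusion indices --- in particular verifying that suppressing $k-s$ parts leaves precisely $\gamma+\beta s$ compartments and that the reindexing by the size $k-s$ of the suppressed set correctly produces the binomial coefficient $\binom{k}{s}$ and the sign $(-1)^{k-s}$. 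As a consistency remark, the computation in Theorem~\ref{theorem:30} shows that this same double sum equals $H_n(\lambda,\beta,\gamma)$, so the present theorem may be viewed as exhibiting a second, purely combinatorial, reading of that quantity.
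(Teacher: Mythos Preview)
Your proposal is correct and follows essentially the same route as the paper: for each fixed $k$ you run inclusion--exclusion over the events ``part $j$ is empty'', observe that suppressing some of the $\beta$-parts leaves a special section counted by $H_n(\lambda-1,\beta,\gamma+\beta s)$, and then sum over $k$. The paper's proof is terser (it indexes by the number of empty parts rather than the number of surviving parts, so its intermediate term is written $H_n(\lambda-1,\beta,\gamma+\beta k-\beta s)$), but after the obvious reindexing the two arguments coincide.
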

         \begin{proof} Considering a barred preferential arrangement of $X_n$ having $\gamma+k\beta$ compartments on the first section (where $k=0,1,2,\ldots,n$) and $\lambda-1$ sections having property~\ref{property:1}. The number of those arrangements in which $s$ of the $k$ parts having $\beta$ compartments are empty is\\ $H_n(\lambda-1,\beta,\gamma+\beta k-\beta s)$. The inclusion-exclusion principle completes the proof.
         \end{proof}

         \begin{remark}  Theorem~\ref{theorem:28}  is a generalised combinatorial interpretation of one side of Nelsen's Theorem discussed in \eqref{equation:1}.\nonumber
         \end{remark}
         
           \begin{corollary}\label{corollary:1} For $\gamma\in\mathbb{N}_0$ and $\beta,\lambda\in\mathbb{N}$,
               \begin{equation}\label{equation:25} \sum\limits_{k=0}^n\sum\limits_{s=0}^{k}\binom{k}{s}(-1)^{k-s}H_n(\lambda-1,\beta,\gamma+\beta s)=H_n(\lambda,\beta,\gamma).
               \end{equation}
               \end{corollary}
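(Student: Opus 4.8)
The plan is to observe that the left-hand side of \eqref{equation:25} is exactly the double sum that already appeared on the left of Theorem~\ref{theorem:30}, and that the first half of that theorem's proof identifies this sum with a coefficient extraction which, by the very definition of $H_n(\lambda,\beta,\gamma)$, equals the right-hand side. Thus the corollary is really a restatement of one link in the chain of equalities established in proving Theorem~\ref{theorem:30}, and almost nothing new needs to be done.

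Concretely, I would start from the definition $H_n(\lambda,\beta,\gamma)=\left[\frac{x^n}{n!}\right]\frac{e^{\gamma x}}{(2-e^{\beta x})^\lambda}$ and expand one factor of $\frac{1}{2-e^{\beta x}}$ as the geometric series $\sum_{k\ge 0}(e^{\beta x}-1)^k$, so that $\frac{e^{\gamma x}}{(2-e^{\beta x})^\lambda}=\frac{e^{\gamma x}}{(2-e^{\beta x})^{\lambda-1}}\sum_{k\ge 0}(e^{\beta x}-1)^k$. Expanding $(e^{\beta x}-1)^k$ by the binomial theorem produces the inner alternating sum $\sum_{s=0}^{k}\binom{k}{s}(-1)^{k-s}e^{\beta s x}$, and multiplying through by $\frac{e^{\gamma x}}{(2-e^{\beta x})^{\lambda-1}}$ turns each $e^{\beta s x}$ into the generating function whose coefficient is $H_n(\lambda-1,\beta,\gamma+\beta s)$. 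Reading off the coefficient of $\frac{x^n}{n!}$ — and noting that only the terms with $k\le n$ contribute — yields precisely $\sum_{k=0}^{n}\sum_{s=0}^{k}\binom{k}{s}(-1)^{k-s}H_n(\lambda-1,\beta,\gamma+\beta s)$, which therefore equals $H_n(\lambda,\beta,\gamma)$.

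There is essentially no obstacle along this algebraic route; the only point requiring (routine) care is that the geometric expansion is meant in the ring of formal power series, where it is valid since $e^{\beta x}-1$ has no constant term, and that truncating the outer index at $k=n$ loses nothing because higher powers of $e^{\beta x}-1$ contribute only to degrees exceeding $n$. An alternative, purely combinatorial proof is also available: Theorem~\ref{theorem:28} already interprets the left-hand side, while Theorem~\ref{theorem:6} interprets $H_n(\lambda,\beta,\gamma)$, so it would suffice to exhibit a bijection. The natural map inserts a bar immediately after the $\gamma$-compartment part of the enlarged first section, thereby promoting the $k$ nonempty $\beta$-compartment blocks living inside that section into the blocks of a genuine new section with property~\ref{property:1}, transforming a configuration counted by Theorem~\ref{theorem:28} into one counted by $H_n(\lambda,\beta,\gamma)$. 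The delicate part of this second approach — and the reason I would prefer the one-line algebraic argument — is reconciling the section bookkeeping and the inclusion-exclusion signs so that the bijection is genuinely well defined and sign-consistent, which is more work than the generating-function identity demands.
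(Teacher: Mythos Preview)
Your proposal is correct. Your preferred algebraic argument---reusing the first half of the computation in Theorem~\ref{theorem:30} and recognising the extracted coefficient as $H_n(\lambda,\beta,\gamma)$ by definition---is valid and entirely self-contained; the points you flag about formal power series and truncation at $k=n$ are the only things to check, and both are routine.

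The paper, however, takes precisely the route you sketch as your \emph{alternative}: it invokes the combinatorial interpretation from Theorem~\ref{theorem:28} and observes that the $k$ nonempty $\beta$-compartment parts sitting in the enlarged first section can be read as an ordinary section with property~\ref{property:1}, so that the object being counted is exactly a barred preferential arrangement enumerated by $H_n(\lambda,\beta,\gamma)$. The paper's write-up is a single sentence and does not spell out the bijection or the sign bookkeeping you were worried about; in effect it treats the inclusion--exclusion as already absorbed into Theorem~\ref{theorem:28}. So your concern that the combinatorial route is ``more work'' is perhaps overstated---once Theorem~\ref{theorem:28} is in hand the remaining step is essentially a relabelling---but your algebraic argument has the advantage of being independent of Theorem~\ref{theorem:28} and of making the link to Theorem~\ref{theorem:30} explicit.
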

               \begin{proof} In Theorem~\ref{theorem:28} the $k$ parts with $\beta$ compartments  each on the first section can collectively can be interpreted as a single section having property~\ref{property:1}. The by the inclusion/exclusion principle we have,
             
              \noindent                 
                  $\sum\limits_{k=0}^n\sum\limits_{s=0}^{k}\binom{k}{s}(-1)^{k-s}H_n(\lambda-1,\beta,\gamma+\beta s)=H_n(\lambda,\beta,\gamma)$. 
              
               \end{proof}
               \noindent The proof of Corollary~\ref{corollary:1} is a generalisation of that used in obtaining Equation 2 of~\cite{mendelson:1982}. 
                   \begin{corollary}[Gould \& Mays\cite{GouldandMayspaperonchainsinpowerset}]  The number of chains on the power set of $X_n$ is,  
                           \begin{center}
                           $\sum\limits_{k=0}^{n}\sum\limits_{s=0}^{k}\binom{k}{s}(2+s)^n(-1)^{k-s}$,
                           \end{center} for all $n\in\mathbb{N}_0$.
                           \end{corollary}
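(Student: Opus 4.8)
The plan is to obtain this corollary as an immediate specialisation of Corollary~\ref{corollary:1}, rather than by re-deriving anything combinatorially. Concretely, I would set $\lambda=1$, $\beta=1$, and $\gamma=2$ in the identity
\[\sum_{k=0}^n\sum_{s=0}^{k}\binom{k}{s}(-1)^{k-s}H_n(\lambda-1,\beta,\gamma+\beta s)=H_n(\lambda,\beta,\gamma),\]
which is legitimate since $\lambda=1,\beta=1\in\mathbb{N}$ and $\gamma=2\in\mathbb{N}_0$. Under this substitution $\lambda-1=0$ and $\gamma+\beta s=2+s$, so the left-hand side becomes $\sum_{k=0}^n\sum_{s=0}^{k}\binom{k}{s}(-1)^{k-s}H_n(0,1,2+s)$ and the right-hand side becomes $H_n(1,1,2)$. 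The entire task then reduces to evaluating these two specialised quantities.

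First I would simplify the summand on the left. Because the denominator carries the exponent $\lambda-1=0$, we have $H_n(0,1,2+s)=\begin{bmatrix}\frac{x^n}{n!}\end{bmatrix}\frac{e^{(2+s)x}}{(2-e^{x})^{0}}=\begin{bmatrix}\frac{x^n}{n!}\end{bmatrix}e^{(2+s)x}=(2+s)^n$, so the left-hand side is exactly $\sum_{k=0}^n\sum_{s=0}^{k}\binom{k}{s}(-1)^{k-s}(2+s)^n$, the expression in the statement. Next I would identify the right-hand side: by definition $H_n(1,1,2)=\begin{bmatrix}\frac{x^n}{n!}\end{bmatrix}\frac{e^{2x}}{2-e^{x}}$, which is precisely the Nelsen--Schmidt generating function \eqref{equation:8} with $\gamma=2$. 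As recalled at the start of Section~2, this is the generating function for the number of chains in the power set of $X_n$. Combining the two evaluations with Corollary~\ref{corollary:1} yields the claimed formula for all $n\in\mathbb{N}_0$, and one can sanity-check the small cases $n=0$ (value $1$) and $n=1$ (value $3$) against $\frac{e^{2x}}{2-e^{x}}$.

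The only point requiring care, and what I would treat as the main (though minor) obstacle, is the boundary behaviour at $\lambda-1=0$: one must confirm that $H_n(0,1,2+s)$ is read off correctly from $\frac{e^{(2+s)x}}{(2-e^{x})^{0}}=e^{(2+s)x}$, and hence that the $\lambda=1$ instance of Corollary~\ref{corollary:1} is indeed within its stated hypotheses. Once this edge case is justified, no genuine computation remains; the result is a pure relabelling of Corollary~\ref{corollary:1} together with the chain interpretation of $\frac{e^{2x}}{2-e^{x}}$.
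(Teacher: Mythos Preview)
Your proposal is correct and matches the paper's intended derivation: the paper gives no explicit proof for this corollary, placing it immediately after Corollary~\ref{corollary:1} as a direct specialisation, and your choice $\lambda=1$, $\beta=1$, $\gamma=2$ together with the identification $H_n(0,1,2+s)=(2+s)^n$ and the chain interpretation of $\dfrac{e^{2x}}{2-e^{x}}$ recalled in Section~2 is exactly what is being invoked. Your remark about checking the boundary case $\lambda-1=0$ is appropriate and the only point worth flagging.
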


The generalisation of the Nelsen- Schmidt type generating functions can be interpreted formally, automatically by the symbolic method\cite{Book Analytic combinatorics}. The general generating function,
\[\frac{e^{\gamma x}}{(2-e^{\beta x})^{\lambda}},\]
is the result of the translation of the following construction according to the general theory:
\[[\mbox{SET}(\mathcal{X})]^{\gamma}\times[\mbox{SEQ}([\mbox{SET}(\mathcal{X})]^{\beta}_{>0}]^{\lambda}.\]

Formally, this is a a triple of the following: a $\gamma$-tuple of possible empty sets, a $\lambda$-tuple of non-empty sequences of $\beta$-tuples of sets of elements. This is clearly equivalent to the set $\mathcal{H}_n(\lambda, \beta, \gamma)$. Namely, the first section is the $\gamma$-tuple of sets. Further, since the order of the sections are determined by the $\lambda$ bars, it an be seen as $\lambda$-tuples of the containments of the sections. Each section contains  an ordered partition with the extra structure of having $\beta$ compartments. This is a sequence of $\beta$-tuples of sets, and here we do not allow that all entries of the $\beta$-tuples are empty.

           \begin{lemma}\cite{generatingfunctionology}\label{lemma:7}
           Assume $f(z)=\sum\limits_{n=0}^{\infty}a_ nz^n$ is an analytic function in some region containing the origin, assume that a singularity of $f(z)$ of smallest modulus be at a point $z_0\not=0$, and suppose that $\epsilon>0$ is given. Then $\exists$ $N$ such that $\forall$ $n>N$ we have $|a_n|<\begin{pmatrix}
           \frac{1}{|z_0|}+\epsilon
           \end{pmatrix}^n$. 
           \end{lemma}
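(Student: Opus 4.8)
The plan is to reduce the claim to the single fact that the radius of convergence $R$ of the Taylor series $\sum_{n=0}^{\infty}a_nz^n$ equals the modulus $|z_0|$ of the nearest singularity, and then to read off the coefficient bound from a Cauchy estimate. The conceptual content is entirely in that first identification; the rest is bookkeeping.

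First I would establish $R=|z_0|$. Since no singularity of $f$ lies closer to the origin than $z_0$, the function is analytic on the open disk $\{|z|<|z_0|\}$, so its Taylor expansion about $0$ converges throughout that disk and hence $R\geq|z_0|$. Conversely, a power series is analytic at every interior point of its disk of convergence; if $R>|z_0|$ held, then $f$ would be analytic at $z_0$, contradicting that $z_0$ is a singularity. Thus $R\leq|z_0|$, and therefore $R=|z_0|$. (Equivalently, one may invoke Cauchy--Hadamard, $1/R=\limsup_n|a_n|^{1/n}$, to obtain $\limsup_n|a_n|^{1/n}=1/|z_0|$ directly.)

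Next, given $\epsilon>0$, I would choose a radius $r$ with $|z_0|/(1+\epsilon|z_0|)<r<|z_0|$; such $r$ exists because $\epsilon,|z_0|>0$, and by construction it satisfies $1/r<1/|z_0|+\epsilon$. On the closed disk $\{|z|\leq r\}$, which lies strictly inside the disk of analyticity, set $M(r)=\max_{|z|=r}|f(z)|<\infty$. Cauchy's coefficient estimate then gives $|a_n|\leq M(r)\,r^{-n}$ for every $n$. Finally I would absorb the constant $M(r)$ into the exponential rate: since $M(r)^{1/n}\to 1$ while the fixed ratio $1/r$ is strictly below $1/|z_0|+\epsilon$, there is an $N$ with $M(r)^{1/n}/r<1/|z_0|+\epsilon$ for all $n>N$, whence $|a_n|\leq\big(M(r)^{1/n}/r\big)^{n}<(1/|z_0|+\epsilon)^n$, as required.

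I expect the main obstacle to be the equality $R=|z_0|$, which rests on the principle that a Taylor series cannot be continued analytically past a singularity on the boundary circle of its disk of convergence; once this is in hand, the Cauchy estimate and the elementary limit $M(r)^{1/n}\to 1$ finish the argument without further difficulty. I prefer the Cauchy-estimate route over directly quoting Cauchy--Hadamard because it produces the stated inequality uniformly for all $n>N$ with no additional manipulation of the $\limsup$.
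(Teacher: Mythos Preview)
Your argument is correct and is the standard proof of this classical result: identify the radius of convergence with $|z_0|$, apply Cauchy's coefficient estimate on a circle of radius $r<|z_0|$ chosen so that $1/r<1/|z_0|+\epsilon$, and absorb the constant $M(r)$ into the exponential via $M(r)^{1/n}\to 1$. There is nothing to fault in the logic.

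However, there is nothing to compare against: the paper does not prove this lemma at all. It is quoted verbatim from Wilf's \emph{generatingfunctionology} (the citation attached to the lemma statement) and used as a black box to obtain the asymptotic bound on $H_n(\lambda,\beta,\gamma)$ in the theorem that follows. So your proof supplies what the paper deliberately omits by reference.
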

           
           \begin{theorem}For every $\epsilon>0$, and $\lambda,\beta\in\mathbb{N}$, $\exists$ $N\in \mathbb{N}$ such that for $n>N$,
           
           \begin{equation} 
            H_n(\lambda,\beta,\gamma)=\mathcal{O}\normalsize\begin{pmatrix}n!\begin{pmatrix}
                      \frac{\beta}{\log2}+\epsilon
                      \end{pmatrix}^n\end{pmatrix}.\end{equation}
           \end{theorem}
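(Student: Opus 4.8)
The plan is to read the claimed bound off the location of the dominant singularity of the generating function and then invoke Lemma~\ref{lemma:7}. First I would set $f(z)=\frac{e^{\gamma z}}{(2-e^{\beta z})^{\lambda}}$ and write its Maclaurin expansion as $f(z)=\sum_{n=0}^{\infty}a_n z^n$, so that by the very definition of $H_n(\lambda,\beta,\gamma)$ one has $a_n=\frac{1}{n!}H_n(\lambda,\beta,\gamma)$. Since the numerator $e^{\gamma z}$ is entire and the denominator equals $(2-e^{0})^{\lambda}=1\neq 0$ at the origin, $f$ is analytic in a neighbourhood of $0$, so Lemma~\ref{lemma:7} applies once we identify the singularity of smallest modulus.

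The key step is to locate that singularity. The only singularities of $f$ arise from the zeros of $2-e^{\beta z}$, i.e. from the solutions of $e^{\beta z}=2$, which are $z_k=\frac{\log 2+2\pi i k}{\beta}$ for $k\in\mathbb{Z}$. Their moduli are $|z_k|=\frac{1}{\beta}\sqrt{(\log 2)^2+(2\pi k)^2}$, which is strictly minimised at $k=0$, giving $z_0=\frac{\log 2}{\beta}$ with $|z_0|=\frac{\log 2}{\beta}$. I would then check that $z_0$ is a genuine singularity and not removable: at $z_0$ the numerator takes the value $e^{\gamma z_0}=2^{\gamma/\beta}\neq 0$, so $z_0$ is in fact a pole of order $\lambda$ rather than a removable point. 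Hence $z_0$ is the singularity of $f$ of smallest modulus and $\frac{1}{|z_0|}=\frac{\beta}{\log 2}$.

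Finally I would apply Lemma~\ref{lemma:7} with this $z_0$ and the given $\epsilon>0$: there exists $N$ such that for all $n>N$ we have $|a_n|<\left(\frac{\beta}{\log 2}+\epsilon\right)^n$. Because $H_n(\lambda,\beta,\gamma)$ counts combinatorial objects it is non-negative, so $H_n(\lambda,\beta,\gamma)=n!\,a_n<n!\left(\frac{\beta}{\log 2}+\epsilon\right)^n$ for $n>N$, which is exactly the asserted $\mathcal{O}$-estimate. The only real point requiring care, and hence the main obstacle, is the verification that $z_0=\frac{\log 2}{\beta}$ is indeed the unique smallest-modulus singularity and that it is not cancelled by a zero of $e^{\gamma z}$; once this is settled the estimate is an immediate transcription of Lemma~\ref{lemma:7}, multiplied through by $n!$.
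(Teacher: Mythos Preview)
Your proposal is correct and follows essentially the same route as the paper: identify the singularities of $f(z)=\dfrac{e^{\gamma z}}{(2-e^{\beta z})^{\lambda}}$ as $z_k=\dfrac{\log 2+2\pi i k}{\beta}$, observe that the one of smallest modulus is $z_0=\dfrac{\log 2}{\beta}$, and then apply Lemma~\ref{lemma:7} to the coefficients $a_n=H_n(\lambda,\beta,\gamma)/n!$. Your write-up is in fact more careful than the paper's, since you explicitly verify that $z_0$ is not removable and spell out the passage from $a_n$ to $H_n$ via the factor $n!$.
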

           \begin{proof}
            \begin{equation}\label{equation:20}f(z)=\frac{e^{\gamma z}}{(2-e^{\beta z})^{\lambda}}.\end{equation}

     $f(z)$ has the singularities      
     $z=\frac{\log2+2k\pi i}{\beta}$ where $k\in\mathbb{N}_0$. So $f(z)$ is analytic on the disk $|z|<\frac{\log2}{\beta}$, the special case $\beta=1,\gamma=0$ and $\lambda=1$ is found in page 45-48 of \cite{generatingfunctionology}. By lemma~\ref{lemma:7} we obtain the result.
     
     \end{proof}

\end{document}